\documentclass[a4paper, reqno, 12pt]{amsart}

\textwidth=14cm
\oddsidemargin=.8cm
\evensidemargin=.8cm
\textheight=22.1cm
\topmargin=0cm

\usepackage{amsfonts, amsmath, amsthm}
\usepackage{latexsym}

\theoremstyle{plain}
\newtheorem{thm}{Theorem}[section]
\newtheorem{prp}[thm]{Proposition} 
\newtheorem{lem}[thm]{Lemma} 
\newtheorem{cor}[thm]{Corollary} 

\theoremstyle{definition}

\theoremstyle{remark}
\newtheorem{rmk}{Remark}[section]

\newtheorem{example}{Example}[section]

\numberwithin{equation}{section}

\newcommand{\N}{\mathbb{N}}

\newcommand{\R}{\mathbb{R}}
\newcommand{\C}{\mathbb{C}}
\newcommand{\Sph}{\mathbb{S}}
\newcommand{\calH}{\mathcal{H}_0}

\newcommand{\pa}{\partial}
\newcommand{\eps}{\varepsilon}
\newcommand{\jb}[1]{\langle #1 \rangle}
\newcommand{\dal}{\Box}

\newcommand{\Sum}{\sideset{}{'}\sum}  
\newcommand{\co}[1]{#1_{\bf c}}

\DeclareMathOperator{\realpart}{\rm Re}
\DeclareMathOperator{\imagpart}{\rm Im}
\DeclareMathOperator{\supp}{\rm supp}
\DeclareMathOperator{\diag}{\rm diag}

\title[Semilinear hyperbolic 
systems violating the null condition]
{Semilinear hyperbolic 
 systems violating the null condition}  

\author[S.~Katayama]{Soichiro Katayama}
\address{Department of Mathematics, Wakayama University,
             930 Sakaedani, Wakayama 640-8510, Japan.}
\email{katayama@center.wakayama-u.ac.jp}
\author[T.~Matoba]{Toshiaki Matoba}
\address{Osaka Prefectural Tennoji High School\\
   2-4-23 Sanmeicho, Abeno-ku, Osaka 545-0005, Japan.} 
\author[H.~Sunagawa]{Hideaki Sunagawa}
\address{Department of Mathematics, Graduate School of Science, 
             Osaka University, 
             1-1 Machikaneyama-cho, Toyonaka, Osaka 560-0043, Japan.} 
\email{sunagawa@math.sci.osaka-u.ac.jp}
 
\date{\today}   
\dedicatory{Dedicated to the memory of Professor Rentaro Agemi}

\keywords{Nonlinear wave equations; asymptotic behavior.}

\subjclass{Primary~35L71, Secondary~35B40.}

\begin{document}
\begin{abstract} 
We consider systems of semilinear wave equations in three space 
 dimensions with quadratic nonlinear terms not satisfying the null condition. 
We prove small data global existence of the classical solution 
 under a new structural condition related to the weak null condition. 
For two-component systems satisfying this condition, we also observe a new kind of asymptotic behavior:
Only one component is dissipated and the other one 
behaves like a free solution in the large time. 
\end{abstract}
\maketitle

\section{Introduction} 
This paper is concerned with global existence and large time behavior of
classical solutions to the Cauchy problem for systems of semilinear wave 
equations of the following type:
\begin{align}
& \Box u=F(\pa u) \qquad\qquad\qquad\qquad\qquad\quad\, \text{for $(t,x)\in (0,\infty)\times \R^3$},
\label{eq}\\
& u(0,x)=\eps f(x),\ (\pa_t u)(0,x)=\eps g(x) \quad \text{for $x\in \R^3$},
\label{data}
\end{align}
where $u=(u_1, \ldots, u_N)^{\rm T}$ is an $\R^N$-valued unknown function
of $(t,x)\in [0,\infty) \times \R^3$, 
$\Box=\pa_t^2-\Delta_x=\pa_t^2-\sum_{k=1}^3\pa_{k}^2$,
and $\pa u=(\pa_0 u, \pa_1 u, \pa_2 u, \pa_3 u)$ with the notation
$$
\pa_0=\pa_t=\frac{\pa}{\pa t},\ \pa_k=\frac{\pa}{\pa x_k},\ k=1,2,3.
$$
Here $B^{\rm T}$ stands for the transpose of a matrix (or vector) $B$.
For simplicity, we suppose that $f,g\in C^\infty_0(\R^3; \R^N)$,
and that the nonlinear term $F=(F_1,\ldots, F_N)^{\rm T}$ 
has the form
\begin{equation}
\label{Nonlinearity}
 F_j(\pa u)
 =
 \sum_{k,l=1}^N \sum_{a,b=0}^3 c_j^{kl,ab} (\pa_a u_k)(\pa_b u_l),
 \quad j=1,\ldots, N
\end{equation}
with some constants $c_j^{kl,ab}\in \R$. 
$\eps$ is a parameter which will be always
assumed to be sufficiently small.

Let us briefly review known results concerning the global existence and 
the asymptotic behavior.
In general, it is known that the solution to the Cauchy problem 
\eqref{eq}--\eqref{data} blows up in finite time no matter how small $\eps$ 
is; for example, 
if $N=1$ and $F(\pa u)=(\pa_t u)^2$, then the solution $u$ blows up in finite 
time for any $\eps>0$ unless $(f,g)\equiv(0,0)$ in \eqref{data} 
(see John~\cite{john}). 
Therefore, we need some restriction on the nonlinearity to 
obtain global solutions even for small data.
We say that we have {\it small data global existence} 
for the problem \eqref{eq}--\eqref{data} if for any $f,g\in C^\infty_0(\R^3)$,
there is a positive constant $\eps_0$ such that the Cauchy problem 
\eqref{eq}--\eqref{data} admits a global solution for any 
$\eps\in (0,\eps_0]$. 
Klainerman~\cite{kl} introduced a sufficient condition for 
small data global existence, known as the {\it null condition} (see also 
Christodoulou~\cite{ch}): 
We say that the null condition is satisfied if we have
\begin{equation}
 \label{NullCond}
 F^{\rm red}(\omega,Y)=0,\quad \omega=(\omega_1,\omega_2,\omega_3)\in \Sph^2,
 \ Y=(Y_1,\ldots, Y_N)^{\rm T}\in \R^N,
\end{equation}
where the {\it reduced nonlinearity} 
$F^{\rm red}(\omega,Y)
 =
 \bigl(F_1^{\rm red}(\omega,Y), \ldots, F_N^{\rm red}(\omega,Y)\bigr)^{\rm T}$ 
is defined by
\begin{equation}
\label{ReducedNonlinearity}
 F_j^{\rm red}(\omega, Y)
 :=
 F_j(\omega_0 Y, \omega_1 Y, \omega_2 Y, \omega_3 Y)
 =
 \sum_{k,l=1}^N\sum_{a,b=0}^3 c_j^{kl,ab} \omega_a\omega_b Y_kY_l
\end{equation}
for $\omega=(\omega_1,\omega_2,\omega_3)\in \Sph^2$ and 
$Y=(Y_1,\ldots, Y_N)^{\rm T}\in \R^N$ with $\omega_0=-1$. 
Here the constants $c_j^{kl,ab}$ are from \eqref{Nonlinearity}. 
In \cite{ch} and \cite{kl}, it was proved that the null condition implies 
small data global existence. It
is also easy to see that this global solution $u$ for small $\eps$ 
is {\it asymptotically free}, that is to say that
there is $(f^+,g^+)\in \dot{H}^1(\R^3; \R^N)\times L^2(\R^3;\R^N)$ such that
we have 
$$
 \lim_{t\to \infty} \|u(t)-u^+(t)\|_E=0
$$
for the solution $u^+$ to the free wave equation $\Box u^+=0$ with initial 
data $(u^+, \pa_t u^+)(0)=(f^+,g^+)$. Here and in the sequel, $\|\cdot\|_E$ is
the energy norm defined by 
$$
 \|\phi(t)\|_{E}= 
 \left(
 \frac{1}{2}\int_{\R^3} \left(|\pa_t \phi(t,x)|^2 + |\nabla_x \phi(t,x)|^2\right)dx
 \right)^{1/2},
$$
and $\dot{H}^1(\R^3)$ denotes the completion of $C^\infty_0(\R^3)$ 
with respect to the norm given by 
$\|\phi\|_{\dot{H}^1}=\|\nabla_x\phi\|_{L^2}$. 
Introducing the {\it null forms}
\begin{align}
\label{NF01}
Q_0(\phi,\psi)&:=(\pa_t\phi)(\pa_t\psi)-(\nabla_x\phi)\cdot(\nabla_x\psi),\\
\label{NF02}
Q_{ab}(\phi,\psi)&:=(\pa_a\phi)(\pa_b\psi)-(\pa_b\phi)(\pa_a\psi),
\ \ a,b \in \{0,1,2,3\},
\end{align}
we see that the nonlinearity $F$ of the form \eqref{Nonlinearity}
satisfies the null condition if and only if each component $F_j$ can be written
as a linear combination of the null forms $Q_0(u_k,u_l)$ and $Q_{ab}(u_k, u_l)$
with $k,l\in\{1,\ldots, N\}$ and $a, b\in\{0,\ldots, 3\}$. 

In connection with the Einstein equation
that can be expressed in wave coordinates as a system of quasilinear wave 
equations, Lindblad-Rodnianski~\cite{LinRod03} introduced the notion of 
the weak null condition, and proved the small data global existence for 
the Einstein equation in wave coordinates (see \cite{lr} and \cite{LinRod10}). 
The small data global existence is also obtained for a closely related equation
$$
 \dal u=\sum_{a,b=0}^3 g^{ab} u (\pa_a\pa_b u),
 \quad (t,x)\in [0,\infty)\times \R^3
$$
with constants $g^{ab}\in \R$, which satisfies the weak null condition, but 
violates the null condition (see Alinhac~\cite{Ali03}, and 
Lindblad~\cite{Lin92}, \cite{Lin08}). These successful examples suggest that 
the weak null condition implies the small data global existence in general; 
however this is still an open problem even for semilinear systems. 

Before we proceed to further discussion, we give the definition of 
the weak null condition for the semilinear case here: We say that the
weak null condition is satisfied if the reduced system
\begin{equation}
\label{ReducedSystem}
 \pa_t V(t ; \sigma, \omega)
 =
 -\frac{1}{2t}F^{\rm red}\bigl(\omega, V(t ; \sigma, \omega)\bigr),
 \quad t\ge 1,\ \sigma\in \R,\ \omega\in \Sph^2
\end{equation}
admits a global solution $V$ with at most polynomial growth of small power in 
$t$ for small data given at $t=1$, where \eqref{ReducedSystem} is obtained as 
an asymptotic equation for 
$V(t; \sigma, \omega)=(\pa_r-\pa_t)\bigl(ru(t, r\omega)\bigr)/2$ with 
$\sigma=r-t$ 
(see \eqref{ode_0} and \eqref{ode_V} below; see also H\"ormander~\cite{hor2}).
The null condition \eqref{NullCond} immediately implies the weak null 
condition. It is not easy to check whether or not the weak null 
condition is satisfied in general, because it depends on
the global behavior 
of the reduced system \eqref{ReducedSystem}.

In connection with the weak null condition,
Alinhac~\cite{al1} considered systems of semilinear wave equations and
introduced an algebraic condition to ensure the small data global existence.
His condition is stronger than the weak null condition, but still 
weaker than the null condition.
His condition was slightly extended, and the asymptotic behavior of 
global solutions under this extended condition was studied
in Katayama~\cite{ka} (see also Katayama-Kubo~\cite{kk}). 
Quite roughly speaking, the (extended) Alinhac condition says that the reduced 
system, through some change of unknowns, can be expressed as
\begin{equation}
\label{AliRe}
\pa_t V_j=
\begin{cases}
\displaystyle -\frac{1}{2t} \sum_{k=M+1}^N\sum_{l=1}^N C_{kl}(\omega) V_k V_l, 
& 1\le j \le M,\\
0,  
& M+1\le j\le  N
\end{cases}
\end{equation}
with smooth coefficients $C_{kl}(\omega)$ and $M\in \N$.
If the reduced system is of the form \eqref{AliRe}, then 
we can easily check that the weak null condition is satisfied;
however the null condition is violated unless all the coefficients 
$C_{kl}(\omega)$ vanish identically. 
Here we give three typical examples satisfying the (extended) Alinhac 
condition (and thus the weak null condition), 
but violating the null condition, and their asymptotic behavior 
(see \cite{ka} for details): 
The first example is
\begin{equation}
\label{FirstExampleA}
\begin{cases}
\dal u_1=(\pa_t u_2)(\pa_t u_1)+\text{(null forms)},\\
\dal u_2=\text{(null forms)}.
\end{cases}
\end{equation}
The second example is 
\begin{equation}
\label{SecondExampleA}
\begin{cases}
\dal u_1=(\pa_t u_2)^2+\text{(null forms)},\\
\dal u_2=\text{(null forms)}.
\end{cases}
\end{equation}
We can choose $f$ and $g$ in \eqref{data} such that 
we have $\|u(t)\|_E\ge C\eps (1+t)^{C\eps}$ for \eqref{FirstExampleA}, and 
$\|u(t)\|_E\ge C\eps\left(1+\eps\log(2+t)\right)$ for \eqref{SecondExampleA}, 
with a positive constant $C$. In both cases, the energy grows up to infinity, 
and the global solution $u$ is {\it not} asymptotically free for such data.
The third example is
\begin{equation}
\label{ThirdExampleA}
\begin{cases}
\dal u_1=-(\pa_t u_3)(\pa_t u_2)+\text{(null forms)},\\
\Box u_2=(\pa_t u_3)(\pa_t u_1)+\text{(null forms)},\\
\dal u_3=\text{(null forms)}.
\end{cases}
\end{equation}
For this example, we can see that $C^{-1}\eps \le \|u(t)\|_E\le C\eps$ for
some positive constant $C$ unless $(f,g)\equiv(0,0)$; nonetheless, for appropriately chosen $f$ and $g$,
we can show that the global solution $u$ is {\it not} asymptotically free. 

Our aim in this paper is to obtain another kind of algebraic condition
which implies the small data global existence (and the weak null condition).
We will also show that, under this condition, we have the asymptotic behavior
that is quite different from the known cases.

\section{The main results}
In what follows, we assume the following condition on the nonlinearity:
\begin{enumerate}
\item[(H)] 
There is an $N\times N$-matrix valued continuous function 
${\mathcal A}={\mathcal A}(\omega)$ on $\Sph^2$ such that 
${\mathcal A}(\omega)$ is a positive-definite symmetric matrix for each 
$\omega\in \Sph^2$, and that
$$
 Y^{\rm T} {\mathcal A}(\omega) F^{\rm red}(\omega, Y)=0,
 \quad \omega\in \Sph^2,\ Y\in \R^N.
$$
\end{enumerate}

Concerning the global existence, our main result is the following:
\begin{thm}[Global existence]\label{thm_sdge}
Suppose that the condition {\rm (H)} is satisfied. 
Then, for any $f, g \in C_0^{\infty}(\R^3;\R^N)$, 
there exists $\eps_0>0$ such that 
\eqref{eq}--\eqref{data} admits a unique global $C^{\infty}$-solution
$u$ for $(t,x)\in [0,\infty) \times \R^3$ 
if $\eps \in (0,\eps_0]$. 
\end{thm}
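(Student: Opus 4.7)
The plan is to apply the Klainerman vector-field method, augmented by an $\mathcal A$-weighted Alinhac ghost-weight energy so that hypothesis (H) supplies the key cancellation in place of the classical null condition. First I would set up the generalised energies
\[
E_s(t) := \sum_{|\alpha|\le s} \|\pa\Gamma^\alpha u(t)\|_{L^2(\R^3)}^2
\]
built from the usual Klainerman fields $\Gamma = \{\pa_a, L_i, \Omega_{ij}, S\}$, recall the Klainerman--Sobolev decay
\[
|\pa\Gamma^\beta u(t,x)| \le C\jb{t+|x|}^{-1}\jb{t-|x|}^{-1/2}E_{|\beta|+2}(t)^{1/2},
\]
and reduce matters by a standard local-existence and continuation argument to the a priori bound $E_s(t) \le C\eps^2$ on the life span.

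The heart of the argument is a weighted energy identity. Set $\sigma = |x|-t$, fix a small $\mu>0$, and let $q(\sigma) = \int_{-\infty}^{\sigma}\jb{\tau}^{-1-\mu}d\tau$ be Alinhac's ghost weight. For each multi-index $\alpha$ define
\[
\mathcal E^{(\alpha)}(t) := \frac{1}{2}\int_{\R^3} e^{-q(\sigma)}(\pa\Gamma^\alpha u)^{\rm T}\widetilde{\mathcal A}(x)(\pa\Gamma^\alpha u)\,dx,
\]
where $\widetilde{\mathcal A}(x)$ is a smooth matrix-valued function coinciding with $\mathcal A(x/|x|)$ outside a unit ball around the origin and equal to the identity matrix inside. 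Positive-definiteness of $\mathcal A$ makes $\mathcal E_s := \sum_{|\alpha|\le s}\mathcal E^{(\alpha)}$ uniformly equivalent to $E_s$. Differentiating in $t$ produces three types of contributions: a bulk nonlinear term $\int e^{-q}(\pa_t\Gamma^\alpha u)^{\rm T}\widetilde{\mathcal A}\,\Gamma^\alpha F(\pa u)\,dx$, a manifestly coercive ghost-weight term of size $\int q'(\sigma)e^{-q}|G\Gamma^\alpha u|^2\,dx$ (with the good derivative $G := \pa_t+(x/|x|)\cdot\nabla$), and commutator errors from the $x$-dependence of $\widetilde{\mathcal A}$.

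The (H)-cancellation enters in the bulk term. Using $\pa_a = -\omega_a\pa_t + T_a$ with tangential part satisfying $|T_a v|\le C(|Gv|+|x|^{-1}|\Omega v|)$ in the wave zone, I would expand
\[
\Gamma^\alpha F(\pa u) = F^{\rm red}\bigl(\omega,\,\pa_t\Gamma^\alpha u\bigr) + \mathcal R_\alpha,
\]
where $\mathcal R_\alpha$ is a sum of terms each carrying at least one good factor $Gu$ or $|x|^{-1}\Omega u$, together with strictly lower-order commutators $[\Gamma^\alpha,F]$. Contracting with $(\pa_t\Gamma^\alpha u)^{\rm T}\mathcal A(\omega)$, hypothesis (H) annihilates the $F^{\rm red}$ piece \emph{exactly}---this is precisely what (H) is designed for. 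The surviving remainder is handled by Cauchy--Schwarz: one slowly-decaying factor of $\pa u$ is placed in $L^\infty$ via Klainerman--Sobolev, while the good-derivative factors are absorbed into the coercive ghost-weight term (for $Gu$) or into lower-order energies through the Hardy bound $\||x|^{-1}\Omega u\|_{L^2}\le C\|\pa\Omega u\|_{L^2}$. This yields an estimate of the form
\[
\frac{d}{dt}\mathcal E_s(t) \le C\eps\jb{t}^{-1-\mu/2}\mathcal E_s(t) + (\text{integrable remainders}),
\]
from which Gronwall gives $\mathcal E_s(t)\le C\eps^2$ and closes the bootstrap.

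The main obstacle I anticipate is the angular dependence of $\mathcal A(\omega)$: the fields $\pa_a$ and $L_i$ do \emph{not} commute with multiplication by $\omega = x/|x|$, so each commutator $[\Gamma^\alpha,\widetilde{\mathcal A}]$ produces error terms of schematic size $|x|^{-1}(\nabla_\omega\mathcal A)$ acting on derivatives of $u$. Keeping these strictly subordinate---absorbable into either the Alinhac coercive term or into lower-order energies---is the delicate bookkeeping part of the argument. Two structural features mitigate the problem: the scaling field $S$ annihilates $\omega$, and the rotations $\Omega_{ij}$ act on $\omega$ by bounded tangent vector fields on $\Sph^2$; furthermore, finite speed of propagation confines $u$ to $|x|\le t+R$, so the transition region where $\widetilde{\mathcal A}\ne\mathcal A(x/|x|)$ affects only a compactly-supported part of spacetime whose contribution is trivially controlled. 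Carrying these commutators uniformly across all $|\alpha|\le s$, together with the correct placement of factors of $\Gamma^\alpha F(\pa u)$ in $L^2$ versus $L^\infty$, constitutes the bulk of the technical work.
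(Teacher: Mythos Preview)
Your proposal contains a genuine gap at the higher-order energies. The decomposition you write,
\[
\Gamma^\alpha F(\pa u) = F^{\rm red}\bigl(\omega, \pa_t\Gamma^\alpha u\bigr) + \mathcal{R}_\alpha,
\]
is incorrect for $|\alpha|\ge 1$: since $F^{\rm red}$ is quadratic, the right-hand side is quadratic in $\pa_t\Gamma^\alpha u$, whereas $\Gamma^\alpha F(\pa u)$ is \emph{linear} in $\pa\Gamma^\alpha u$ at top order, with coefficient $\pa u$. After replacing $\pa_a\to-\omega_a\pa_t$ modulo good derivatives, the correct top-order piece is the linearisation
\[
G_{\alpha,j}(\omega,\pa_t u,\pa_t\Gamma^\alpha u)=\sum_k\frac{\pa F_j^{\rm red}}{\pa Y_k}(\omega,\pa_t u)\,\pa_t\Gamma^\alpha u_k,
\]
and the bulk contribution becomes $\int e^{-q}(\pa_t\Gamma^\alpha u)^{\rm T}\mathcal{A}(\omega)\,G_\alpha\,dx$. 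Hypothesis~(H) does \emph{not} annihilate this: differentiating $Y^{\rm T}\mathcal{A}(\omega)F^{\rm red}(\omega,Y)=0$ twice in $Y$ shows that the symmetric part of the form $Z\mapsto Z^{\rm T}\mathcal{A}(\omega)G_\alpha(\omega,Y,Z)$ vanishes identically only when $F^{\rm red}\equiv 0$, i.e.\ only under the classical null condition. For the model system~\eqref{TypicalExample} one computes explicitly
\[
(\pa_t\Gamma^\alpha u)^{\rm T}\mathcal{A}\,G_\alpha
= c_0\,c(\omega)\,(\pa_t\Gamma^\alpha u_1)\bigl((\pa_t u_1)(\pa_t\Gamma^\alpha u_2)-(\pa_t u_2)(\pa_t\Gamma^\alpha u_1)\bigr),
\]
which neither vanishes nor has a sign. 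Consequently the best the weighted energy identity yields is $\tfrac{d}{dt}\mathcal{E}_s\le C\|\pa u\|_{L^\infty}\mathcal{E}_s\sim C\eps t^{-1}\mathcal{E}_s$, giving only $\mathcal{E}_s(t)\le C\eps^2 t^{C\eps}$ by Gronwall---not the integrable bound $C\eps\jb{t}^{-1-\mu/2}\mathcal{E}_s$ you claim, and not enough to close the bootstrap on its own.

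The paper's argument is organised precisely around this obstruction. Condition~(H) is invoked \emph{only once}, and pointwise rather than in an energy identity: along each outgoing null ray the profile $V(t)=D_-(ru)\bigl(t,(t+\sigma)\omega\bigr)$ satisfies the ODE~\eqref{ode_V}, so that
\[
\pa_t\bigl(V^{\rm T}\mathcal{A}(\omega)V\bigr)=2V^{\rm T}\mathcal{A}(\omega)H\bigl(t,(t+\sigma)\omega\bigr),
\]
and (H) removes the $F^{\rm red}$ contribution exactly, producing the lossless decay $|\pa u|\le C\eps\jb{t+|x|}^{-1}\jb{t-|x|}^{\mu-1}$ for the undifferentiated solution (Step~3). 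The higher-order profiles $U^{(\alpha)}$ are then allowed to grow like $t^{2^{k-1}C^*\eps}$ (Step~4), and this slow growth is absorbed into the small loss $\jb{t+|x|}^{-\nu}$ built into the bootstrap quantity $e[u]$. Your ghost-weight scheme cannot close without some separate mechanism---such as this characteristic-ODE step---to recover the sharp lowest-order pointwise decay.
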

Since the local existence of the solution is well known, what we have to do 
for the proof of Theorem~\ref{thm_sdge} is to get a suitable {\em a priori} 
estimate for the solution to \eqref{eq}--\eqref{data}. 
This will be carried out in Section~\ref{PT1} after some preliminaries 
in Sections~\ref{CVF} and \ref{PEQ}. 

Under the condition (H), 
there is a positive constant $M_0$ such that
\begin{equation}
\label{BBA}
M_0^{-1}|Y|^2 \le Y^{\rm T}{\mathcal A}(\omega)Y \le M_0|Y|^2,
\quad \omega\in \Sph^2.
\end{equation}
Indeed, if we denote the eigenvalues of
${\mathcal A}(\omega)$ by $\lambda_1(\omega), \ldots, \lambda_N(\omega)$ with
each eigenvalue being counted up to its algebraic multiplicity,
then we have
$$
 \min_{1\le j\le N} \lambda_j(\omega) |Y|^2
 \le 
 Y^{\rm T}{\mathcal A}(\omega) Y
 \le 
 \max_{1\le j\le N} \lambda_j(\omega) |Y|^2,
$$
which leads to \eqref{BBA} because we may assume $\lambda_j(\omega)$ is 
positive and continuous in $\omega\in \Sph^2$. 
Since \eqref{ReducedSystem} and (H) implies
$$
{\pa_t}\left(V^{\rm T}{\mathcal A}(\omega)V\right)=-\frac{1}{t}V^{\rm T}{\mathcal A}(\omega)F^{\rm red}(\omega, V)=0,
$$
we have an {\it a priori} bound for $|V|$ in view of \eqref{BBA}. Hence
the condition (H) implies the weak null condition.
If the null condition is satisfied, then the condition (H) is trivially 
satisfied with ${\mathcal A}(\omega)=I_N$, where $I_N$ is the identity 
$N\times N$-matrix. 
To sum up, the condition (H) is stronger than the weak null condition,
and weaker than the null condition. 

There is no inclusion between the
condition (H) and the (extended) Alinhac condition,
though both of them are satisfied for \eqref{ThirdExampleA};
the examples \eqref{FirstExampleA} and \eqref{SecondExampleA} 
satisfy the Alinhac condition but not the condition (H); 
the next examples satisfy the condition (H) but not the Alinhac 
condition.

\begin{example}\label{Typical}
Let $N=2$ and
\begin{equation}
\label{TypicalExample}
\begin{cases}
\displaystyle 
F_1(\pa u)= -c_0\sum_{a,b=0}^3 c_{ab} (\pa_a u_1)(\pa_b u_2)+N_1(\pa u),\\
\displaystyle 
F_2(\pa u)= \sum_{a,b=0}^3 c_{ab} (\pa_a u_1)(\pa_b u_1)+N_2(\pa u),
\end{cases}
\end{equation}
where $c_0$ is a positive constant and $c_{ab}$ are real constants, 
while $N_1$ and $N_2$ are written as linear combinations of the null forms. 
If we put 
\begin{equation}
 \label{Index}
 c(\omega)
 :=
 \sum_{a,b=0}^3 c_{ab}\omega_a\omega_b,
 \quad \omega=(\omega_1,\omega_2,\omega_3) \in \Sph^2
 \text{ with $\omega_0=-1$},
\end{equation}
then we get $F_1^{\rm red}(\omega, Y)=-c_0 c(\omega)Y_1Y_2$ and 
$F_2^{\rm red}(\omega, Y)=c(\omega)Y_1^2$. Hence we have
$$
 Y^{\rm T} {\mathcal A} F^{\rm red}(\omega, Y)
 =
 Y_1F_1^{\rm red}(\omega, Y)+c_0 Y_2F_2^{\rm red}(\omega, Y)
 =
 0
$$
with ${\mathcal A}=\diag(1, c_0)$,
and we see that the condition $(H)$ is satisfied.
The null condition is not satisfied unless $c(\omega)\equiv 0$.
Due to the result in \cite{ka}, we can also see that the Alinhac condition
is not satisfied unless $c(\omega)\equiv 0$, because the asymptotic behavior
as seen in Theorem~\ref{thm_asymp} below never happens under the Alinhac condition.
\end{example}

\begin{example}
Let $N=2$ and
\begin{equation}
\label{TypicalExampleR}
\begin{cases}
\displaystyle 
 F_1(\pa u)
 =
 -(\pa_t u_1)^2-4(\pa_t u_1)(\pa_t u_2)-3(\pa_t u_2)^2+(\pa_1u_1+\pa_1u_2)^2,
 \\
\displaystyle 
 F_2(\pa u)
 = 3(\pa_t u_1)^2+4(\pa_t u_1)(\pa_t u_2)+(\pa_tu_2)^2-(\pa_1u_1+\pa_1u_2)^2.
\end{cases}
\end{equation}
Then we can check that the condition (H) is satisfied with
$$
{\mathcal A}(\omega)
 =
 \frac{1}{2}
   \left(\begin{matrix}
        3-\omega_1^2 & 1-\omega_1^2\\
        1-\omega_1^2 & 3-\omega_1^2
   \end{matrix} \right).
$$
Indeed,
$$
  F^{\rm red}(\omega, Y)=(Y_1+Y_2)\left(
  \begin{matrix}
  (\omega_1^2-1)Y_1 + (\omega_1^2-3)Y_2\\
  (3-\omega_1^2)Y_1 + (1-\omega_1^2)Y_2
  \end{matrix}
  \right)
$$
is perpendicular to ${\mathcal A}(\omega)Y$. 
Note that the eigenvalues of ${\mathcal A}(\omega)$ are 
$1$ and $(2-\omega_1^2)$, both of which are positive for all 
$\omega\in \Sph^2$.
\end{example}

\begin{rmk}
Let us consider \eqref{eq}--\eqref{data} with $N=2$ and
\begin{equation}
\label{Simple}
\begin{cases}
F_1=-c_1(\pa_t u_1)(\pa_t u_2),\\
F_2=c_2(\pa_t u_1)^2.
\end{cases}
\end{equation}
We have small data global existence if $c_1c_2\ge 0$
because the condition (H) is satisfied for the case $c_1c_2>0$, while the case
$c_1c_2=0$ is trivial.
On the other hand,  if $c_1c_2<0$, then there are $f,g\in C^\infty_0(\R^3)$ 
such that the solution $u$ blows up in finite time no matter how small 
$\eps$ is. Indeed, if we choose $\phi, \psi\in C^\infty_0(\R^3)$ with 
$(\phi, \psi)\not \equiv (0,0)$, then the solution $u=(u_1, u_2)^{\rm T}$ with
$$
 (f_1,g_1)
 =
 \frac{1}{\sqrt{-c_1c_2}}(\phi, \psi),\ (f_2, g_2)=-\frac{1}{c_1}(\phi, \psi)
$$
can be written as $u_1=w/\sqrt{-c_1c_2}$ and $u_2=-w/c_1$,
where $w$ is the solution to $\Box w=(\pa_t w)^2$ with 
$(w,\pa_t w)(0)=(\eps \phi, \eps \psi)$, which blows up in finite time by the 
result of John~\cite{john}.
\end{rmk}

Now we give our main result for the asymptotic behavior of global solutions
under the condition (H). In order to keep the description not too 
complicated, we consider only the case of \eqref{TypicalExample} here. 
The case of general two-component systems satisfying (H) will be outlined 
in Section~\ref{ConclRem}. 
For simplicity of exposition, we put 
$\calH(\R^3):=\dot{H}^1(\R^3)\times L^2(\R^3)$ and
$$
\|(\phi,\psi)\|_{\calH}^2
:=\frac{1}{2}\left(\|\phi\|_{\dot{H}^1}^2+\|\psi\|_{L^2}^2\right),
\quad (\phi,\psi)\in \calH.
$$
Note that 
$\left\|\bigl(\varphi(t),\pa_t \varphi(t)\bigr)\right\|_{\calH}= \|\varphi(t)\|_E$.

\begin{thm}[Asymptotic behavior]\label{thm_asymp}
Let $N=2$, and assume that $F$ is of the form
\eqref{TypicalExample} with $c_0>0$. Suppose that 
$c(\omega)\not \equiv 0$ on $\Sph^2$,
where $c(\omega)$ is defined by \eqref{Index}.
Given $f=(f_1,f_2)^{\rm T}, g=(g_1, g_2)^{\rm T} \in C^\infty_0(\R^3;\R^2)$,
let $\eps$ be sufficiently small, and $u=(u_1,u_2)^{\rm T}$ be the 
global solution for \eqref{eq}--\eqref{data} whose existence is guaranteed by 
Theorem~$\ref{thm_sdge}$. 
Then we have 
\begin{align} 
 \lim_{t \to \infty} \|u_1(t)\|_E=0,
\label{asymp_u1}
\end{align}
and there exists $(f_2^{+},g_2^{+}) \in \calH(\R^3)$ such that 
\begin{align} 
 \lim_{t \to \infty} \|u_2(t)- u_2^{+}(t)\|_E=0,
\label{asymp_u2}
\end{align}
where $u_2^+=u_2^{+}(t,x)$ 
solves $\Box u_2^{+}=0$ with 
$(u_2^{+},\pa_t u_2^{+})(0)=(f_2^{+},g_2^{+})$.
Moreover we have
\begin{equation}
\label{AsympProfile}
\left\|(f_2^+, g_2^+)\right\|_{\calH}
=
\eps \left(
  c_0^{-1}\left\|(f_1, g_1)\right\|_{\calH}^2 
  + 
  \left\|(f_2,g_2)\right\|_{\calH}^2
\right)^{1/2}
+O(\eps^2)
\end{equation}
as $\eps\to +0$.
\end{thm}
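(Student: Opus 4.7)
My plan is to pass from the PDE to its asymptotic ODE, exploit the conservation law supplied by (H), and translate back to energies via the Friedlander radiation-field identity. Following H\"ormander, I would introduce the profile
\[
V_j(t, \sigma, \omega) := \frac{1}{2}(\pa_r - \pa_t)\bigl(r u_j(t, r\omega)\bigr) \Big|_{r = t + \sigma}
\]
for $t \ge 1$ and $(\sigma, \omega) \in \R \times \Sph^2$; the a priori bounds underlying Theorem~\ref{thm_sdge} should imply that $V = (V_1, V_2)^{\rm T}$ satisfies the reduced system \eqref{ReducedSystem} with errors that are integrable in $t$. For \eqref{TypicalExample} this reads
\[
\pa_t V_1 = \frac{c_0\, c(\omega)}{2t}\, V_1 V_2, \qquad \pa_t V_2 = -\frac{c(\omega)}{2t}\, V_1^2,
\]
so that (H) gives the exact conservation $\pa_t(V_1^2 + c_0 V_2^2) = 0$ along the reduced flow.

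Next I would analyze this ODE pointwise in $(\sigma, \omega)$. Set $E(\sigma, \omega) := V_1(1,\sigma,\omega)^2 + c_0 V_2(1,\sigma,\omega)^2$; on the set $\{c(\omega)\, E(\sigma, \omega) \neq 0\}$ the parametrization $V_1 = \sqrt{E}\sin\theta$, $\sqrt{c_0}\, V_2 = \sqrt{E}\cos\theta$ reduces the system to $\pa_t \theta = (c(\omega)\sqrt{c_0 E}/(2t))\sin\theta$, which integrates to $\tan(\theta/2) = C_0 \, t^{\alpha(\sigma,\omega)}$ with $\alpha = c(\omega)\sqrt{c_0 E(\sigma,\omega)}/2$. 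Hence $V_1(t,\sigma,\omega) \to 0$ at a polynomial rate and $V_2(t,\sigma,\omega) \to V_2^\infty(\sigma,\omega)$ with $c_0 (V_2^\infty)^2 = E$. Because $c$ is a nontrivial polynomial in $\omega$, its zero set has measure zero in $\Sph^2$, and the pointwise bound $|V_1|^2 \le E \in L^1(\R \times \Sph^2)$ upgrades these conclusions, via dominated convergence, to $L^2(\R \times \Sph^2)$-convergence $V_1 \to 0$ and $V_2 \to V_2^\infty$.

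Third, I would translate the ODE conclusions back into PDE statements. The Friedlander radiation-field identity $\|u_j(t)\|_E^2 = (1/2)\int_\R\int_{\Sph^2} |V_j(t,\sigma,\omega)|^2 d\sigma d\omega + o(1)$ immediately gives \eqref{asymp_u1}, while the inverse radiation transform applied to $V_2^\infty$ yields $(f_2^+, g_2^+) \in \calH(\R^3)$ whose free evolution $u_2^+$ satisfies \eqref{asymp_u2}. For \eqref{AsympProfile} I would expand $V_j(1,\sigma,\omega) = \eps V_j^{(0)}(\sigma,\omega) + O(\eps^2)$ in $L^2$, where $V_j^{(0)}$ is the linear radiation field of $(f_j, g_j)$ and $\|V_j^{(0)}\|_{L^2}^2 = 2\|(f_j, g_j)\|_\calH^2$; combining this with the integrated conservation $\|V_2^\infty\|_{L^2}^2 = c_0^{-1}\|V_1(1)\|_{L^2}^2 + \|V_2(1)\|_{L^2}^2$ and the identity $\|(f_2^+, g_2^+)\|_\calH^2 = (1/2)\|V_2^\infty\|_{L^2}^2$ yields the claimed expansion.

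The main obstacle lies in the first step: rigorously establishing the reduction to \eqref{ReducedSystem} with remainders small enough not only to preserve the ODE asymptotics in $L^2(\R \times \Sph^2)$, but also to deliver the sharp $O(\eps^2)$ error in \eqref{AsympProfile}. This calls for weighted space-time estimates exploiting the null-form structure of $N_1, N_2$, the a priori decay of $\pa u$ provided by Theorem~\ref{thm_sdge}, and uniform control of $V$ over the noncompact parameter set $\R \times \Sph^2$.
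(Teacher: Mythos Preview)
Your strategy for \eqref{asymp_u1} and \eqref{asymp_u2} is essentially the one the paper carries out: pass to the profile $V=D_-(ru)|_{r=t+\sigma}$, show it satisfies the reduced system up to a remainder of size $C\eps\, t^{2\mu-2}\jb{\sigma}^{-\mu}$ (the paper's Lemma~\ref{lem4_01} and \eqref{est_H}), analyse the resulting ODE to get $V_1\to 0$ and $V_2\to V_2^\infty$ pointwise, upgrade to $L^2(\R\times\Sph^2)$ by dominated convergence, and translate back via the radiation-field identification (the paper's Lemma~\ref{KatLem}). The paper handles the ODE step slightly differently: instead of your $(E,\theta)$-parametrisation of the \emph{unperturbed} flow, it packages the profile as the complex scalar $\co{U}=\sqrt{c_0}\,U_1+ic_0 U_2$ and proves a general perturbation lemma (Lemma~\ref{lem6_001}) for equations of the form $iz'=\Phi(z)z/t+J(t)$ with $\Phi$ real-valued; this produces a limiting $p(\log t)$ solving the unperturbed equation, and \emph{that} is what is integrated explicitly. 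Your route would work too, but you must first show (as you note in your last paragraph) that the integrable remainder does not destroy the $\theta$-dynamics; the paper's iterative construction in Lemma~\ref{lem6_002} is one clean way to do this.

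For \eqref{AsympProfile}, however, your proposed argument has a genuine gap, and the paper takes a different route. Your plan is to use the ODE conservation $V_1^2+c_0V_2^2=\text{const}$, expand $V_j(1,\sigma,\omega)=\eps V_j^{(0)}(\sigma,\omega)+O(\eps^2)$ with $V_j^{(0)}$ the \emph{linear radiation field} of $(f_j,g_j)$, and invoke $\|V_j^{(0)}\|_{L^2}^2=2\|(f_j,g_j)\|_{\calH}^2$. The trouble is that these two statements live at different times: the Friedlander identity $\|V_j^{(0)}\|_{L^2}^2=2\|(f_j,g_j)\|_{\calH}^2$ is an asymptotic statement ($t\to\infty$), whereas $V_j(1,\cdot)$ is a fixed-time quantity that is \emph{not} close to the radiation field. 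Concretely, the error in the finite-time identity $\|u_j(t)\|_E^2\approx\|V_j(t,\cdot)\|_{L^2(\R\times\Sph^2)}^2$ is controlled by $J_1(t)^2\le C\eps^2\jb{t}^{2\mu-1}$ (see the paper's computation of $J_1$), which is $o(1)$ as $t\to\infty$ but is of the \emph{same} order $O(\eps^2)$ as the energies themselves at any bounded $t$. So there is no way to extract $\|(f_j,g_j)\|_{\calH}$ from $\|V_j(1,\cdot)\|_{L^2}$ with an $O(\eps^2)$ error, and your ``integrated conservation'' does not close.

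The paper bypasses the ODE entirely for \eqref{AsympProfile} and works directly with the weighted PDE energy
\[
\|u(t)\|_{\widetilde E}^2:=c_0^{-1}\|u_1(t)\|_E^2+\|u_2(t)\|_E^2.
\]
Differentiating in $t$ produces $\int c_0^{-1}F_1(\pa u)\,\pa_t u_1+F_2(\pa u)\,\pa_t u_2\,dx$. Near the light cone one replaces each $\pa_a u_j$ by $\omega_a D_-u_j$ at the cost of a good derivative (Lemma~\ref{lem3_01}); the resulting cubic $-c_0^{-1}F_1^{\rm red}(\omega,D_-u)D_-u_1-F_2^{\rm red}(\omega,D_-u)D_-u_2$ \emph{vanishes identically} by the very condition~(H) that your ODE conservation encodes. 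What remains is bounded by $C\eps^2(1+t)^{(3\mu/2)-2}\|u(t)\|_{\widetilde E}$, hence integrable, giving
\[
\bigl|\,\|u(t)\|_{\widetilde E}-\|u(0)\|_{\widetilde E}\,\bigr|\le C\eps^2.
\]
Letting $t\to\infty$ and using \eqref{asymp_u1}, \eqref{asymp_u2} (and conservation of $\|u_2^+(t)\|_E$) yields \eqref{AsympProfile} directly from the \emph{exact} initial value $\|u(0)\|_{\widetilde E}=\eps\bigl(c_0^{-1}\|(f_1,g_1)\|_{\calH}^2+\|(f_2,g_2)\|_{\calH}^2\bigr)^{1/2}$. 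This is the missing idea: the conservation law should be exploited at the PDE energy level, where the relation to the data is exact, rather than at the profile level, where it is only asymptotic.
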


The proof of Theorem~\ref{thm_asymp} will be given in Sections~\ref{ASPE} and 
\ref{PT2}. 
Note that the null condition is violated in the assumption of 
Theorem~\ref{thm_asymp} because we have assumed $c(\omega)\not \equiv 0$.

From \eqref{AsympProfile}, we see that $(f_2^{+}, g_2^{+}) \ne (0,0)$
for small $\eps$, unless the Cauchy data for the original problem vanish 
identically. 
Therefore Theorem \ref{thm_asymp} tells us that only $u_1$ is dissipated and 
$u_2$ behaves like a non-trivial free solution in the large time. 
As far as the authors know, there are no previous results on 
such decoupling in the context of nonlinear wave equations. 

Under the Alinhac condition, the global solution (at least for some data) 
behaves differently from the free solution in the large time unless the null 
condition is satisfied. 
In contrast, we may say that the global solution $u$ under the assumption of 
Theorem~\ref{thm_asymp} is asymptotically free by understanding 
\eqref{asymp_u1} as $\|u_1(t)-u_1^+(t)\|_E=0$ 
with $u_1^+\equiv 0$, which trivially satisfies $\Box u_1^+=0$; 
however, this case should be strictly distinguished from 
the situation under the null condition for the following reason. 
As we have stated in the introduction, if $N=2$ and the null condition is 
satisfied (that is $c(\omega)\equiv 0$ for \eqref{TypicalExample}), then
the solution $u=(u_1, u_2)^{\rm T}$ tends to $u^+=(u_1^+,u_2^+)^{\rm T}$ in 
the energy norm, where $u^+$ is the solution to $\Box u^+=0$ with some data 
$(u_j^+, \pa_t u_j^+)(0)=(f_j^+, g_j^+)$. Moreover we can easily obtain 
\begin{equation}
\label{freeProfile}
\left\|(f_j^+,g_j^+)-\eps(f_j,g_j)\right\|_{\calH}=O(\eps^2),\quad j=1,2
\end{equation}
as $\eps\to +0$, which shows that the effect of the nonlinearity is rather 
weak. \eqref{asymp_u1} and \eqref{AsympProfile} make a sharp contrast to 
\eqref{freeProfile}, and they are the consequence of the strong effect of the 
nonlinearity.

\begin{rmk}
Since the condition (H) is invariant under the change of variables 
$(t,x)\mapsto (-t,-x)$, we can also treat the backward Cauchy problem. Hence 
the existence of global $C^\infty$-solution for $(t,x)\in \R\times \R^3$ 
under the condition (H)
follows from Theorem~\ref{thm_sdge},
provided that $\eps$ is small enough. 
Since the form \eqref{TypicalExample} is
also invariant, we can apply Theorem~\ref{thm_asymp} to obtain
$$
\lim_{t\to -\infty} \|u_1(t)\|_E=0
$$
and
$$
\lim_{t\to -\infty} \|u_2(t)-u_2^-(t)\|_E=0
$$
for the solution $u_2^-$ to $\dal u_2^-=0$ with some data 
$(u_2^-,\pa_t u_2^-)(0)=(f_2^-, g_2^-)\in \calH(\R^3)$, provided that
the assumption of Theorem~\ref{thm_asymp} is fulfilled. 
Hence $u_1$ is dissipated not only forward but also backward in time.
\end{rmk}

\begin{rmk}
Here we mention some related topics:
\begin{itemize}
\item
Nonlinear Klein-Gordon systems in two space dimensions with nonlinearity of 
type~\eqref{TypicalExample} 
was considered in Kawahara-Sunagawa~\cite{ks} as an example violating the null 
condition for the Klein-Gordon systems (see \cite{dfx}, \cite{kos}, 
\cite{ks}, 
\cite{suna}, and references cited therein for the null condition for 
Klein-Gordon systems). 
\item
A system of nonlinear Schr\"odinger equations 
related to \eqref{Simple} was considered in
Hayashi-Li-Naumkin~\cite{hln} and \cite{hln2}, where one needs some 
restriction on the final state (see also \cite{li}, \cite{kls}). 
This might correspond to the situation in Theorem~\ref{thm_asymp} 
which suggests that the final state has the special form.
\end{itemize}
\end{rmk}

\section{Commuting vector fields} 
\label{CVF}

In this section, we recall basic properties of the vector fields 
associated with the wave equation. 
In what follows, we denote several positive constants by $C$ which may 
vary from one line to another. 
For  $y \in \R^d$ with a positive integer $d$, 
the notation $\jb{y}=(1+|y|^2)^{1/2}$ will be often used. 
Also we will use the following convention on implicit constants: 
The expression $f=\sum_{\lambda \in \Lambda}' g_{\lambda}$ means that there 
exists a family $\{A_{\lambda}\}_{\lambda \in \Lambda}$ of constants such 
that $f=\sum_{\lambda \in \Lambda} A_{\lambda} g_{\lambda}$.

Let us introduce 
\begin{align*} 
 &S=t\pa_t +\sum_{j=1}^{3}x_j \pa_j,
\\
 &L_j=t\pa_j+x_j\pa_t, \qquad\quad\ j\in \{1,2,3\},
\\
 &\Omega_{jk}
 =x_j\pa_k-x_k\pa_j,  \qquad j,k \in \{1,2,3\},
\\
 &\pa=(\pa_a)_{a=0,1,2,3}=(\pa_t,\pa_{x_1},\pa_{x_2},\pa_{x_3}),
\end{align*}
and we set 
$$
 \Gamma=(\Gamma_0,\Gamma_1, \ldots, \Gamma_{10})
 =
 (S,L_1,L_2,L_3,\Omega_{23},\Omega_{31},\Omega_{12},\pa_0,\pa_1,\pa_2,\pa_3).
$$
For a multi-index $\alpha=(\alpha_0,\alpha_1, \ldots, \alpha_{10})$, 
we write 
$\Gamma^{\alpha}
=\Gamma_0^{\alpha_0}\Gamma_1^{\alpha_1}\cdots\Gamma_{10}^{\alpha_{10}}$ 
and $|\alpha|=\alpha_0+\alpha_1+\cdots+\alpha_{10}$. 
We define 
$$
 |\phi(t,x)|_{k}=
 \left(\sum_{|\alpha|\le k} |\Gamma^{\alpha}\phi(t,x)|^2\right)^{1/2},
\quad 
 \|\phi(t,\cdot)\|_{k}
 =
 \left(
  \sum_{|\alpha|\le k} \|\Gamma^{\alpha}\phi(t,\cdot)\|_{L^2}^{2} 
 \right)^{1/2}
$$ 
for a non-negative integer $k$ and a smooth function $\phi=\phi(t,x)$. 
As is well known, these vector fields satisfy 
$[\Box,S]=2\Box$ and $[\Box,L_j]=[\Box,\Omega_{jk}]=[\Box,\pa_a]=0$,
where $[A,B]=AB-BA$ for linear operators $A$ and $B$. 
From them it follows that 
\begin{equation}
 \Box \Gamma^{\alpha} \phi =\widetilde{\Gamma}^{\alpha} \Box \phi,
\label{Comm01}
\end{equation}
where 
$\widetilde{\Gamma}^{\alpha}
=(\Gamma_0+2)^{\alpha_0}\Gamma_1^{\alpha_1}\cdots\Gamma_{10}^{\alpha_{10}}$. 
We also note that 
$$
 [\Gamma_j,\Gamma_k]= 
\Sum_{l=0}^{10}\Gamma_l, 
\quad
 [\Gamma_j,\pa_a]= 
\Sum_{b=0}^{3}\pa_b.
$$
Hence we can check that the estimates 
\begin{align}
 & |\Gamma^{\alpha}\Gamma^{\beta}\phi| \le C |\phi|_{|\alpha|+|\beta|}, 
 \nonumber\\
 & C^{-1} |\pa \phi|_{s} \le 
   \sum_{|\alpha|\le s} |\pa \Gamma^{\alpha} \phi|
   \le C |\pa \phi|_{s} \label{Comm03}
\end{align}
are valid for any multi-indices $\alpha$, $\beta$ and any non-negative integer 
$s$.

Next we set $r=|x|$, $\omega_j=x_j/r$, $\pa_r=\sum_{j=1}^{3}\omega_j \pa_j$,
and $\pa_{\pm}=\pa_t \pm \pa_r$. 
We write $\omega=(\omega_j)_{j=1,2,3}$. 
For simplicity of exposition, we also introduce
$$
D_\pm=\pm\frac{1}{2}\pa_\pm=\frac{1}{2}(\pa_r\pm \pa_t).
$$
We summarize several useful inequalities related to $\Gamma$. 

\begin{lem} \label{lem3_01}
For a smooth function $\phi$ of $(t,x) \in [0,\infty)\times \R^3$, we have 
\begin{align}
 &\left|D_{+}(r\phi(t,x)) \right|
 \le 
 C |\phi(t,x)|_{1},
 \label{est_d_plus}\\
 &\left|r\pa_t \phi(t,x) 
{}+D_-(r\phi(t,x)) \right|
 \le 
 C |\phi(t,x)|_{1},
 \label{est_pa_t}
\end{align}
and 
\begin{align}
 &\left|r \pa_j \phi(t,x) 
-\omega_jD_-(r\phi(t,x)) \right|
 \le 
 C |\phi(t,x)|_{1}
 \label{est_pa_j}
\end{align}
for $j=1,2,3$.
\end{lem}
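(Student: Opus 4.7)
The plan is to reduce all three inequalities to a single master bound on $r\pa_+\phi$, namely $|r\pa_+\phi(t,x)|\le C|\phi(t,x)|_1$. This comes from the identity
$$
(t+r)\pa_+ = S + \sum_{k=1}^3 \omega_k L_k,
$$
which is obtained by adding $S = t\pa_t + r\pa_r$ to $\sum_{k=1}^3\omega_k L_k = t\pa_r + r\pa_t$ and recognizing $(t+r)(\pa_t+\pa_r)$ on the right. Multiplying by $r/(t+r)\le 1$ (the boundary case $t=r=0$ being handled trivially by continuity, as the left-hand side vanishes there) yields the bound, since $|S\phi|$ and $|L_k\phi|$ are both dominated by $|\phi|_1$.

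From this, \eqref{est_d_plus} follows at once by expanding $D_+(r\phi) = \tfrac{1}{2}\phi + \tfrac{r}{2}\pa_+\phi$ and applying the triangle inequality together with $|\phi|\le|\phi|_1$. Estimate \eqref{est_pa_t} is then essentially a restatement of \eqref{est_d_plus}: since $\pa_t r = 0$ and $\pa_t = D_+ - D_-$, we have the algebraic identity
$$
r\pa_t\phi = \pa_t(r\phi) = D_+(r\phi) - D_-(r\phi),
$$
so $r\pa_t\phi + D_-(r\phi) = D_+(r\phi)$ and the required inequality coincides with \eqref{est_d_plus}.

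For \eqref{est_pa_j}, I would first derive the angular-radial splitting
$$
r\pa_j = r\omega_j\pa_r - \sum_{k=1}^3 \omega_k\Omega_{jk},
$$
obtained by contracting $\Omega_{jk} = x_j\pa_k - x_k\pa_j$ against $\omega_k$ and using $\sum_{k=1}^3\omega_k x_k = r$. Substituting this and expanding $\omega_j D_-(r\phi) = \tfrac{\omega_j}{2}\phi + \tfrac{r\omega_j}{2}(\pa_r - \pa_t)\phi$, a short rearrangement produces
$$
r\pa_j\phi - \omega_j D_-(r\phi) = \omega_j D_+(r\phi) - \omega_j\phi - \sum_{k=1}^3 \omega_k\Omega_{jk}\phi,
$$
so the bound again follows from \eqref{est_d_plus} combined with $|\omega_j|\le 1$ and the definition of $|\phi|_1$. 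In sum, the argument is a chain of algebraic identities anchored on the single nontrivial input $(t+r)\pa_+ = S + \sum_k \omega_k L_k$; there is no genuine obstacle beyond careful bookkeeping, the only mild subtlety being the trivial handling of the point $t=r=0$ when invoking $r/(t+r)\le 1$.
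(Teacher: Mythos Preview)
Your proof is correct and follows essentially the same route as the paper: the key input is the identity $(t+r)\pa_+ = S + \sum_k \omega_k L_k$ (the paper writes it as $D_+(r\phi) = \tfrac{r}{2(r+t)}(S\phi+L_r\phi)+\tfrac{\phi}{2}$), from which \eqref{est_d_plus} is immediate, \eqref{est_pa_t} reduces to \eqref{est_d_plus} via $r\pa_t\phi = D_+(r\phi)-D_-(r\phi)$, and \eqref{est_pa_j} is handled by the angular--radial splitting $r(\pa_j-\omega_j\pa_r)\phi = -\sum_k\omega_k\Omega_{jk}\phi$ combined with \eqref{est_d_plus}. The only cosmetic difference is that you isolate the bound on $r\pa_+\phi$ as a preliminary ``master'' estimate, whereas the paper packages it directly into the formula for $D_+(r\phi)$.
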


\begin{proof}
\eqref{est_d_plus} and \eqref{est_pa_t} are direct consequences of the 
following relations:  
\begin{align*}
 & D_+(r\phi)=\frac{r}{2(r+t)} (S\phi+L_r\phi)+\frac{\phi}{2},\\
&r\pa_t \phi =
{}-D_-(r\phi)+D_+(r\phi),
\end{align*}
where $L_r=r\pa_t+t\pa_r= \sum_{j=1}^{3}\omega_j L_j$. 
\eqref{est_pa_j} follows just from
\begin{align}
 r(\pa_j -\omega_j\pa_r)\phi = \sum_{k=1}^{3} \omega_k \Omega_{kj}\phi
\label{est_perp}
\end{align}
and
\begin{align*}
 r\pa_r \phi = 
D_-(r\phi)+D_+(r\phi)-\phi,
\end{align*}
if we use \eqref{est_d_plus} to estimate $D_+\phi$.
\end{proof}

\begin{lem}\label{lem3_02}
 For a smooth function $\phi$ of $(t,x) \in [0,\infty)\times \R^3$ 
and a non-negative integer $s$, we have 
\begin{align*}
 |\pa \phi(t,x)|_s \le C \jb{t-|x|} ^{-1} |\phi(t,x)|_{s+1}.
\end{align*}
\end{lem}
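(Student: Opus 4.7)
The plan is to reduce the lemma to the base case $s=0$ via the commutator relations and then prove that base case by direct algebraic manipulation. Since $[\Gamma_j,\pa_a]$ is a linear combination of the $\pa_b$'s, induction on $|\beta|$ shows that $\Gamma^\beta\pa_a\phi$ is a linear combination of terms $\pa_b\Gamma^{\beta'}\phi$ with $|\beta'|\le|\beta|$. Combined with \eqref{Comm03} this gives
$$
|\pa\phi|_s \;\le\; C\sum_{|\beta'|\le s}|\pa\Gamma^{\beta'}\phi|,
$$
so applying the base case to each $\Gamma^{\beta'}\phi$ together with $|\Gamma^{\beta'}\phi|_1\le C|\phi|_{s+1}$ yields the general inequality.

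For the base case $|\pa\phi(t,x)|\le C\jb{t-r}^{-1}|\phi(t,x)|_1$ (with $r=|x|$), the key identities are
$$
(t+r)\pa_+ \;=\; S+L_r, \qquad (t-r)\pa_- \;=\; S-L_r, \qquad t\pa_j \;=\; L_j - x_j\pa_t,
$$
with $L_r=\sum_j\omega_jL_j$, together with $r(\pa_j-\omega_j\pa_r)\phi = \sum_k\omega_k\Omega_{kj}\phi$ from \eqref{est_perp}. The bounds for $\pa_t = (\pa_++\pa_-)/2$ and for the radial part $\omega_j\pa_r = \omega_j(\pa_+-\pa_-)/2$ of $\pa_j$ follow immediately: when $|t-r|\le 1$ the estimate is trivial since $\jb{t-r}^{-1}$ is bounded below and each $\pa_a$ lies in $\Gamma$; when $|t-r|\ge 1$, one has $t+r\ge|t-r|\ge \jb{t-r}/\sqrt 2$, and the first two identities give $|\pa_\pm\phi|\le C|\phi|_1/\jb{t-r}$.

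I expect the main obstacle to be the tangential component of $\pa_j$: \eqref{est_perp} only yields $|(\pa_j-\omega_j\pa_r)\phi|\le C|\phi|_1/r$, which is insufficient when $r$ is small compared to $\jb{t-r}$. I plan to resolve this by splitting into two complementary regions. In $\{r\ge t/2\}$ one has $|t-r|\le 2r$, hence $\jb{t-r}\le Cr$ for $r\ge 1$ (the sub-case $r<1$ forces $t<2$, so $\jb{t-r}$ is bounded and the estimate is again trivial), and the tangential identity gives the required bound. In $\{r<t/2\}$ one has $t-r>t/2$, hence $\jb{t-r}\le Ct$ for $t\ge 1$ (the sub-case $t<1$ gives bounded $\jb{t-r}$ and is trivial); here the identity $t\pa_j = L_j - x_j\pa_t$, together with the already-established bound on $\pa_t$, yields $|\pa_j\phi|\le (|L_j\phi| + r|\pa_t\phi|)/t \le C|\phi|_1/\jb{t-r}$. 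Combining the regions completes the base case, and the induction outlined above then finishes the proof.
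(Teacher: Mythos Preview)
Your argument is correct and follows essentially the same strategy as the paper: the paper's sketch lists the identities $(t-r)\pa_t\phi=\frac{1}{t+r}(tS-rL_r)\phi$, $(t-r)\pa_r\phi=\frac{1}{t+r}(tL_r-rS)\phi$, and $t\Omega_{kj}\phi=x_kL_j\phi-x_jL_k\phi$ together with \eqref{est_perp}, and defers the remaining case analysis to Lindblad~\cite{lind}. Your identities $(t\pm r)\pa_{\pm}=S\pm L_r$ and $t\pa_j=L_j-x_j\pa_t$ are simple repackagings of the same algebra, and your explicit region-splitting supplies the details the paper leaves to the reference.
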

This lemma is due to Lindblad~\cite{lind}, which comes from the identities 
$$
(t-r)\pa_t\phi=\frac{1}{t+r}\left(tS-rL_r\right)\phi,
$$
$$
(t-r)\pa_r\phi=\frac{1}{t+r}\left(tL_r-rS\right)\phi,
$$
and $t\Omega_{kj}\phi=x_kL_j\phi-x_jL_k\phi$, as well as \eqref{est_perp}
(see \cite{lind} for the detail of the proof).
\medskip

We close this section with the following decay estimate 
for solutions to inhomogeneous wave equations. 

\begin{lem}[H\"ormander's $L^{1}$--$L^{\infty}$ estimate]\label{lem3_03}
Let $\phi$ be a smooth solution to 
$$
 \Box \phi =G, \quad (t,x) \in (0,T)\times \R^3
$$ 
with $\phi(0,x)=\pa_t \phi(0,x)=0$. It holds that 
$$
 \jb{t+|x|} |\phi(t,x)|
 \le 
 C
 \sum_{|\alpha|\le 2} \int_{0}^{t} 
 \|\Gamma^{\alpha}G(\tau,\cdot)\|_{L^1(\R^3)} \frac{d\tau}{\jb{\tau}}
$$
for $0\le t<T$. Here the constant $C$ is independent of $T$.
\end{lem}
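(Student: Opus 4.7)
The plan is to start from the Kirchhoff representation of the retarded solution in three space dimensions. Since $\phi$ solves $\Box\phi=G$ with vanishing Cauchy data,
\[
 \phi(t,x)=\frac{1}{4\pi}\int_0^t (t-\tau)\,d\tau\int_{\Sph^2} G\bigl(\tau,x+(t-\tau)\sigma\bigr)\,dS(\sigma).
\]
From this, the lemma reduces to the per-time spherical-mean bound
\[
 \jb{t+|x|}\,(t-\tau)\int_{\Sph^2}|G(\tau, x+(t-\tau)\sigma)|\,dS(\sigma)\le \frac{C}{\jb{\tau}}\sum_{|\alpha|\le 2}\|\Gamma^\alpha G(\tau,\cdot)\|_{L^1(\R^3)}
\]
for every $(t,x)$ and every $\tau\in[0,t]$; integrating in $\tau\in[0,t]$ then yields the estimate.

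To prove the pointwise bound, the spherical integral must be converted to an $L^1(\R^3)$ norm with two $\Gamma$-derivatives. The $L^\infty$-on-$\Sph^2$ behavior of a function is dominated, via Sobolev embedding $H^2(\Sph^2)\hookrightarrow L^\infty(\Sph^2)$, by two angular derivatives; the rotational fields $\Omega_{jk}$ are precisely the generators of these angular derivatives on each sphere centered at the origin. After rescaling the sphere $\{y:|y-x|=t-\tau\}$ to a fixed sphere and radially thickening it into an $O(1)$-collar (the thickening costs at most one radial derivative, controlled by $\pa$), the surface integral is converted into an $L^1(\R^3)$ integral of a quantity involving at most two $\Gamma$-derivatives of $G$. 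The weight $\jb{t+|x|}$ and the gain $1/\jb{\tau}$ are then produced from the algebraic identities
\[
 t\,G=S\,G-\sum_{j=1}^3 x_j\pa_j G,\qquad x_j\,G=L_j\,G-t\pa_jG,
\]
each of which trades the desired weight for a single $\Gamma$-derivative of $G$; the factor $1/\jb{\tau}$ arises from rearranging the same identities so that a leading factor $\tau$ is absorbed into the coefficient of $\pa_\tau$ inside $S$.

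The main obstacle will be the case analysis near the transition region $|x|\sim t\sim\tau$, where the sphere $\{|y-x|=t-\tau\}$ can graze the backward light cone from the origin and no single vector field among $S,L_1,L_2,L_3$ alone produces the full weight $\jb{t+|x|}$. One resolves this via a partition of unity on $\Sph^2$ adapted to the angle between $\sigma$ and $x/|x|$, selecting on each piece the most favorable combination of $S$ and $L_j$ to extract the weight. A complete implementation along these lines is the content of H\"ormander's original argument in his monograph, which could be invoked directly here; the structural ingredients above explain why the particular count of at most two $\Gamma$-derivatives and the weight $1/\jb{\tau}$ on the right-hand side are the natural ones.
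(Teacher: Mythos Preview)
The paper does not give its own proof of this lemma; it simply cites H\"ormander's original paper, Lemma~6.6.8 of his monograph, and Section~2.1 of Sogge's book. Your proposal ultimately does the same thing in its last sentence, so in that sense you and the paper agree: this is a standard result to be invoked rather than reproved here.

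That said, the heuristic sketch you offer before deferring to H\"ormander contains an imprecision worth flagging. The mechanism that converts the surface integral over $\{|y-x|=t-\tau\}$ into an $L^1(\R^3)$ norm is not the Sobolev embedding $H^2(\Sph^2)\hookrightarrow L^\infty(\Sph^2)$: you are bounding an \emph{integral} over the sphere, not a supremum, so an $L^\infty$ embedding points the wrong way. In H\"ormander's actual argument one parametrizes the sphere, performs a change of variables, and integrates by parts directly against the homogeneous vector fields (rotations together with $S$ and $L_j$); the two derivatives are spent extracting the weight $\jb{t+|x|}/\jb{\tau}$ from the geometry of the backward cone, not on a Sobolev trace step. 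Your identities $tG=SG-\sum x_j\pa_jG$ and $x_jG=L_jG-t\pa_jG$ are the right ingredients for this, but the ``radial thickening plus $H^2\hookrightarrow L^\infty$'' description does not match how the bound is actually produced. Since you explicitly hand off the full implementation to H\"ormander's monograph, this is a matter of exposition rather than a genuine gap.
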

%
See \cite{hor} for the proof (see also Lemma~6.6.8 of \cite{hor2}, 
or Section~2.1 of \cite{sogge}). 
\begin{rmk}\label{HomDecay} Various kinds of decay estimates for homogeneous wave equations are also
available. Here we only mention the following one that is a simple 
corollary to Lemma~\ref{lem3_03} via the cut-off argument (see \cite{Lin08} for the proof): 
For $R>0$, there is a positive constant $C_R$ such that we have
$$
\jb{t+|x|}|\phi(t,x)|\le C_R \|\pa\phi(0)\|_2
$$
for a smooth solution $\Box \phi(t,x)=0$ for $(t,x)\in (0,\infty)\times \R^3$,
provided that $\phi(0,x)=(\pa_t\phi)(0,x)=0$ for $|x|\ge R$.
\end{rmk}

\section{The profile equation} 
\label{PEQ}

Let $0<T\le \infty$, and let  $u$ be the solution to \eqref{eq}--\eqref{data}
on $[0, T)\times \R^3$.
We suppose that 
\begin{align}
 \supp f\cup \supp g \subset B_R 
\label{supp_0}
\end{align}
for some $R>0$, where $B_M=\{x\in \R^3;|x|\le M\}$ for $M>0$. 
Then, from the finite propagation property, we have
\begin{align}
 \supp u(t,\cdot)\subset B_{t+R},\quad 0\le t<T.
\label{supp_t}
\end{align}

Now we put $r=|x|$, $\omega=(\omega_1, \omega_2,\omega_3)=x/|x|$ 
and set 
$$
 \Delta_{\Sph^2}=\sum_{1\le j<k\le 3} \Omega_{jk}^2,
$$
so that 
\begin{align} 
r\Box \phi =\pa_+ \pa_-(r\phi)-\frac{1}{r}\Delta_{\Sph^2} \phi.
 \label{dal_polar}
\end{align}
We define $U=(U_1,\ldots, U_N)^{\rm T}$ by
\begin{align} 
U(t, x):=D_-\bigl(r u(t, x) \bigr),
\quad (t, x)\in [0,T)\times (\R^3\setminus\{0\})
 \label{U}
\end{align}
for the solution $u$ of \eqref{eq}. In view of 
\eqref{est_pa_t} and \eqref{est_pa_j}, the asymptotic profiles as 
$t \to \infty$ of $\pa_t u$ and $\nabla_x u$ should be given by $ -U/r$ and 
$\omega U/r$, respectively, because we can expect $|u(t,x)|_1\to 0$ as 
$t\to\infty$. Also it follows from \eqref{dal_polar} that 
\begin{align}
 \pa_+U(t, x)= -\frac{1}{2t} F^{\rm red}\bigl(\omega, U(t,x)\bigr)+H(t, x),
 \label{ode_0}
\end{align}
where $F^{\rm red}=F^{\rm red}(\omega, Y)$ is defined by \eqref{ReducedNonlinearity},
and $H=H(t,x)$ is given by
\begin{align}
\label{DefRemainder}
H=& 
-\frac{1}{2}\left(rF(\pa u)-\frac{1}{t}F^{\rm red}(\omega, U)\right)
-\frac{1}{2r} \Delta_{\Sph^2} u.
\end{align}
As we will see in Lemma~\ref{lem4_01} below, $H$ can be regarded as a 
remainder. For these reasons, we call \eqref{ode_0} {\em the profile equation} 
associated with \eqref{eq}, which plays an important role 
in our analysis. Observe that the reduced system \eqref{ReducedSystem} is
obtained by neglecting $H$ and changing variables in \eqref{ode_0} (see \eqref{ode_V} below).

We also need an analogous equation for $\Gamma^\alpha u$ with a multi-index 
$\alpha$. For this purpose, we put
\begin{equation}
\label{DefUAlpha}
U^{(\alpha)}(t,x)=\bigl(U_1^{(\alpha)}(t,x),\ldots, U_N^{(\alpha)}(t,x)\bigr)^{\rm T}:= D_-\bigl(r\Gamma^\alpha u(t,x)\bigr).
\end{equation}
Since 
$\dal(\Gamma^\alpha u)=\widetilde{\Gamma}^\alpha\left(F(\pa u)\right)$,
we deduce from \eqref{dal_polar} that 
\begin{align}
\pa_+U^{(\alpha)}= -\frac{1}{2t} G_{\alpha}(\omega, U, U^{(\alpha)})+H_\alpha
 \label{ode_alpha}
\end{align}
for $|\alpha|\ge 1$,
where $G_\alpha=(G_{\alpha,j})_{1\le j\le N}^{\rm T}$ is given by
\begin{align}
 G_{\alpha,j}\left(\omega, U, U^{(\alpha)}\right)
 =& \sum_{k=1}^N\frac{\pa F_j^{\rm red}}{\pa Y_k}(\omega, U) U^{(\alpha)}_k 
 \nonumber\\
 =& \sum_{k,l=1}^N\sum_{a,b=0}^3 c^{kl, ab}_j\omega_a\omega_b\left(U_k U_l^{(\alpha)}+U_k^{(\alpha)}U_l\right)
 \label{DefGAlpha}
\end{align}
with the constants $c^{kl, ab}_j$ appeared in \eqref{Nonlinearity},
and $H_\alpha$ is given by 
\begin{equation}
H_\alpha(t,x)
= 
-\frac{1}{2}\left(
 r\widetilde{\Gamma}^\alpha F(\pa u)- \frac{1}{t}G_{\alpha}\left(\omega, U, U^{(\alpha)}\right)
\right)
{}-\frac{1}{2r}\Delta_{\Sph^2} \Gamma^\alpha u.
\label{DefHAlpha}
\end{equation}

In the rest part of this section, we focus on preliminary estimates for 
$H$ and $H_{\alpha}$ in terms of the solution $u$ near the light cone. 
To be more specific, we put
\begin{align*}
 \Lambda_{T,R}:=\{(t,x)\in [0, T)\times \R^3;\, 1\le t/2\le |x|\le t+R\}.
\end{align*}
Note that we have
$$
(1+t+|x|)^{-1}\le |x|^{-1}\le 2t^{-1}\le 3(1+t)^{-1}\le 3(R+2)(1+t+|x|)^{-1}
$$
for $(t,x)\in \Lambda_{T,R}$. In other words, the weights $\jb{t+|x|}^{-1}$, 
$(1+t)^{-1}$, $|x|^{-1}$, and
$t^{-1}$ are equivalent to each other in $\Lambda_{T,R}$.
For a non-negative integer $s$, we also introduce an auxiliary notation 
$|\cdot |_{\sharp,s}$ by
\begin{equation}
|\phi(t,x)|_{\sharp,s}:=|\pa \phi(t,x)|_s+\jb{t+|x|}^{-1}|\phi(t,x)|_{s+1}.
\label{norm_sharp}
\end{equation}

\begin{lem}\label{lem4_01}
We have 
\begin{align*}
|H(t,x)|
 \le & C \left(|u(t,x)|_{\sharp,0}|u(t,x)|_1+t^{-1}|u(t,x)|_2 \right),
\end{align*}
for $(t,x)\in \Lambda_{T,R}$. 
Here the constant $C$ is independent of $T$. 
Also, in the case of $s \ge 1$, we have 
\begin{align*}
 \sum_{|\alpha|=s}|H_\alpha(t,x)|\le & C_s (|u|_{\sharp,s}|u|_{s+1}
 +t|\pa u|_{s-1}^2+t^{-1}|u|_{s+2})
\end{align*}
for $(t,x)\in \Lambda_{T,R}$, where $C_s$ is a positive constant 
which does not depend on $T$.
\end{lem}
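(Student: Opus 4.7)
The plan is to split $H$ into the reduced-nonlinearity approximation piece $A := rF(\pa u) - t^{-1}F^{\rm red}(\omega, U)$ and the angular piece $(2r)^{-1}\Delta_{\Sph^2} u$. On $\Lambda_{T,R}$ we have $r\sim t$, so the second piece is immediately bounded by $Cr^{-1}|u|_2 \le Ct^{-1}|u|_2$, accounting for the last term of the stated estimate; the substantive task is to show $|A| \le C|u|_{\sharp,0}|u|_1$.

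For this I would adopt the convention $\omega_0 = -1$ and apply Lemma~\ref{lem3_01} to write $r\pa_a u_k = \omega_a U_k + R_a^k$ with $|R_a^k| \le C|u_k|_1$ for $a \in \{0,1,2,3\}$ uniformly. Substituting this into $rF_j(\pa u) = r^{-1}\sum c_j^{kl,ab}(r\pa_a u_k)(r\pa_b u_l)$ produces the main term $r^{-1}F_j^{\rm red}(\omega, U)$ plus cross terms each carrying at least one factor $R_\bullet^\bullet$. Using the pointwise bounds $|U|/r \le C|u|_{\sharp,0}$ (which follows from the identity $U = \tfrac{r}{2}(\pa_r - \pa_t)u + \tfrac{u}{2}$ together with $r^{-1} \le C\jb{t+|x|}^{-1}$ on $\Lambda_{T,R}$) and $|R_a^k|/r \le C\jb{t+|x|}^{-1}|u|_1 \le C|u|_{\sharp,0}$, every cross term is majorized by $C|u|_{\sharp,0}|u|_1$. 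To finally replace $r^{-1}$ by $t^{-1}$, I would use $|r^{-1} - t^{-1}| \le R/(rt)$ on $\Lambda_{T,R}$ combined with $|F^{\rm red}(\omega, U)| \le C|U|^2$, so that the resulting error is $\le C(|U|/r)(|U|/t) \le C|u|_{\sharp,0}^2 \le C|u|_{\sharp,0}|u|_1$, where the last step uses $|u|_{\sharp,0} \le C|u|_1$.

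For $H_\alpha$ I would follow the same strategy. First I would expand $\widetilde{\Gamma}^\alpha F(\pa u)$ by the Leibniz rule and then apply \eqref{Comm03} to move each $\Gamma$-derivative past the $\pa_a$, obtaining a linear combination of products $(\pa_a\Gamma^\beta u_k)(\pa_b\Gamma^\gamma u_l)$ with $|\beta| + |\gamma| \le s$. The top-order contribution, from the pairs $(|\beta|, |\gamma|) \in \{(s,0),(0,s)\}$, collapses under the approximations $r\pa_c \Gamma^\nu u_m \approx \omega_c U_m^{(\nu)}$ to precisely $r^{-1} G_\alpha(\omega, U, U^{(\alpha)})$; its error is controlled by $C|u|_{\sharp,s}|u|_{s+1}$ in exactly the same manner as above, now using $|U^{(\alpha)}|/r \le C|u|_{\sharp,s}$. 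The further $r^{-1} \to t^{-1}$ replacement produces an error of the same order. Every remaining non-top pair has $|\beta|, |\gamma| \le s-1$ and contributes $r|\pa u|_{|\beta|}|\pa u|_{|\gamma|} \le (t+R)|\pa u|_{s-1}^2 \le Ct|\pa u|_{s-1}^2$ on $\Lambda_{T,R}$. Together with the angular bound $(2r)^{-1}|\Delta_{\Sph^2}\Gamma^\alpha u| \le Ct^{-1}|u|_{s+2}$, this yields the desired estimate.

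The main obstacle is the bookkeeping in the higher-order case: the combined Leibniz-plus-commutation expansion of $\widetilde{\Gamma}^\alpha F(\pa u)$ generates a sizeable collection of terms at various differential orders, and I must isolate exactly the two top-order contributions responsible for cancellation against $t^{-1}G_\alpha$ while verifying that every remaining term fits into either $|u|_{\sharp,s}|u|_{s+1}$ or $t|\pa u|_{s-1}^2$.
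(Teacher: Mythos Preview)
Your overall strategy coincides with the paper's: split off the angular Laplacian, approximate $r\pa_a u_k$ by $\omega_a U_k$ using Lemma~\ref{lem3_01}, and treat the non-top Leibniz terms in $\widetilde{\Gamma}^\alpha F(\pa u)$ as the source of the $t|\pa u|_{s-1}^2$ contribution. The bookkeeping you flag as the main obstacle is indeed routine.

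There is, however, a genuine gap in the $r^{-1}\to t^{-1}$ replacement step. Your claim that $|r^{-1}-t^{-1}|\le R/(rt)$ on $\Lambda_{T,R}$ is false: the region is $\{t/2\le |x|\le t+R\}$, so $t-r$ ranges up to $t/2$, not merely up to $R$. Consequently the error term $(r^{-1}-t^{-1})F^{\rm red}(\omega,U)$ is of size $\dfrac{|t-r|}{rt}|U|^2$, and with only the bound $|U|/r\le C|u|_{\sharp,0}$ you would be left with an uncontrolled factor $|t-r|$.

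The missing ingredient is Lemma~\ref{lem3_02}, which yields $\jb{t-r}\,|U|\le Ct\,|u|_1$ (this is exactly how the paper absorbs the weight). With it one obtains
\[
\frac{|t-r|}{rt}\,|U|^2
\;\le\; C\,\frac{\jb{t-r}|U|}{t}\cdot\frac{|U|}{r}
\;\le\; C\,|u|_1\,|u|_{\sharp,0},
\]
which is the estimate you want. The same correction is needed for the $r^{-1}\to t^{-1}$ step in the $H_\alpha$ argument, where $G_\alpha$ replaces $F^{\rm red}$ and the analogous bound $\jb{t-r}\,|U^{(\alpha)}|\le Ct\,|u|_{s+1}$ (again via Lemma~\ref{lem3_02}) does the job. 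Once this is inserted, your proof matches the paper's.
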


\begin{proof}
Let $(t,x)=(t,r\omega)\in \Lambda_{T,R}$ and $|\alpha|=s \ge 0$.
First we note that 
\begin{align}
|U^{(\alpha)}(t,x)|
 &\le 
 r|D_-\Gamma^{\alpha}u|+\frac{1}{2}|\Gamma^{\alpha}u|
 \nonumber\\
 &\le 
 C r (|\pa u|_{s} + r^{-1}|u|_{s})
 \le 
 C t|u(t,x)|_{\sharp,s}
\label{est_U_alpha_01}
\end{align}
by the definition of $|\cdot|_{\sharp,s}$, and that 
\begin{align}
 \jb{t-r}|U^{(\alpha)}(t,x)|
 &\le 
 C t \left( 
  \jb{t-r}|\pa u(t,x)|_s+ \frac{\jb{t-r}}{\jb{t+r}}|u(t,x)|_{s} 
 \right) \nonumber\\
 &\le 
 C t |u(t,x)|_{s+1}
\label{est_U_alpha_02}
\end{align}
by Lemma~\ref{lem3_02}. 

Now we consider the estimate for $H$. We decompose it as follows: 
$$
 H
 =
 -\frac{1}{2r}\Bigl( r^2 F(\pa u) - F^{\rm red}(\omega, U) \Bigr) 
 {}-\frac{t-r}{2rt}F^{\rm red}(\omega, U)
 {}-\frac{1}{2r} \Delta_{\Sph^2} u.
$$
It is easy to see that the third term can be dominated by $Ct^{-1}|u|_2$. 
As for the second term, we have 
\begin{align*}
 \frac{|t-r|}{rt}|F^{\rm red}(\omega, U)|
 \le 
 C t^{-1} \jb{t-r} |U| \cdot t^{-1}|U|
 \le 
 C |u|_{1} |u|_{\sharp,0}, 
\end{align*}
because of \eqref{est_U_alpha_01} and \eqref{est_U_alpha_02} with $s=0$. 
To estimate the first term, 
noting that \eqref{est_pa_t} and \eqref{est_pa_j} imply
\begin{align*}
|(r\pa_a u_k)(r\pa_b u_l)-(\omega_a U_k)(\omega_b U_l)|
\le & |r\pa_a u_k-\omega_aU_k|\,|r\pa_b u_l|+|\omega_aU_k|\,|r\pa_bu_l-\omega_bU_l|\\
\le & C(r|\pa u|+|U|)|u|_1
\end{align*}
with $\omega_0=-1$,
and that \eqref{Nonlinearity} yields
\begin{align*}
 r^2 F_j(\pa u) - F_j^{\rm red}(\omega, U) 
 &=
  \sum_{k,l=1}^N\sum_{a,b=0}^3 c_j^{kl, ab} \left((r\pa_a u_k)(r\pa_b u_l)-(\omega_a U_k)(\omega_b U_l)\right),
\end{align*}
we obtain 
\begin{align*}
 \frac{1}{2r}| r^2 F(\pa u) - F^{\rm red}(\omega, U)|
 \le 
 C (|\pa u| + r^{-1}|U|)|u|_1
 \le 
 C|u|_{\sharp,0}|u|_1
\end{align*}
with the help of \eqref{est_U_alpha_01}.

Next we turn to the estimate for $H_{\alpha}$ with $|\alpha|=s \ge 1$. 
For this purpose, we set $\widetilde{F}_\alpha=(\widetilde{F}_{\alpha, j})_{1\le j\le N}^{\rm T}$ with 
$$
 \widetilde{F}_{\alpha, j}
 =\sum_{k,l=1}^N \sum_{a,b=0}^3 c_j^{kl, ab}\left((\pa_a u_k)(\Gamma^\alpha \pa_b u_l)+
 (\Gamma^\alpha \pa_a u_k)(\pa_b u_l)\right)
$$
to split $H_{\alpha}$ into the following form: 
$$
 H_{\alpha}
 =
 -\frac{r}{2} \left(\widetilde{\Gamma}^{\alpha}F(\pa u) 
 -\widetilde{F}_{\alpha} \right) 
 -\frac{1}{2r}
 \left(r^2 \widetilde{F}_{\alpha} - G_{\alpha} \right) 
 -\frac{t-r}{2rt}G_{\alpha} 
 -\frac{1}{2r} \Delta_{\Sph^2} \Gamma^{\alpha}u.
$$
Since the first term consists of a linear combination of the terms 
in the form $r(\Gamma^{\beta}\pa_a u_k)(\Gamma^{\gamma}\pa_b u_l)$ 
with $|\beta|, |\gamma|\le s-1$, $k,l\in \{1,\ldots, N\}$, and $a, b \in \{0,1,2,3\}$, 
it can be estimated by $Ct|\pa u|_{s-1}^2$. Other terms can be treated 
in the same way as in the previous case.
\end{proof}

\section{Proof of Theorem \ref{thm_sdge}}
\label{PT1} 
 
Let $u(t,x)$ be a smooth solution to 
\eqref{eq}--\eqref{data} on $[0, T_0)\times\R^3$ with some 
$T_0\in (0,\infty]$. For $0<T\le T_0$, we put 
\begin{align*}
 e[u](T)=& \sup_{(t,x)\in[0,T)\times \R^3}
 \Bigl(
 \jb{t+|x|} \jb{t-|x|}^{1-\mu}  |\pa u(t,x)| 
 \\
 &\qquad\qquad\qquad\qquad {}+ 
 \jb{t+|x|}^{1-\nu}  \jb{t-|x|}^{1-\mu}
 |\pa u(t,x)|_{k} 
 \Bigr)
\end{align*}
with some $\mu$, $\nu>0$ and a positive integer $k$.
We also put
$$
e[u](0)=\lim_{T\to +0} e[u](T).
$$
Observe that there is a positive constant $\eps_1$ such that 
$0<\eps\le \eps_1$ implies $e[u](0)\le \sqrt{\eps}/2$, 
because we have $e[u](0)=O(\eps)$.

The main step toward global existence is to show the following. 

\begin{prp}[{\em A priori} estimate]\label{lem_apriori}
Let $k\ge 3$, $0<\mu < 1/2$, and $0<4(k+1)\nu \le \mu$. 
There exist positive constants $\eps_2$ and $m$, which depend only on 
$k$, $\mu$ and $\nu$,  such that 
\begin{align} 
 e[u](T)\le \sqrt{\eps}
\label{est_before}
\end{align} 
implies 
\begin{align} 
 e[u](T) \le m\eps, 
\label{est_after}
\end{align} 
provided that $0<\eps \le \eps_2$ and $0<T\le T_0$.
\end{prp}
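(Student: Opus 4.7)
The argument is the standard continuity/bootstrap: assuming \eqref{est_before} on $[0,T)$ with $\eps$ small, I would derive \eqref{est_after} with $m$ independent of $T$ and $\eps$, so that (since $m\eps<\sqrt{\eps}/2$ for $\eps$ small) a continuity argument propagates the improved bound to the full time interval. Spacetime splits naturally into the near-light-cone region $\Lambda_{T,R}$, where condition (H) is brought to bear via the profile equations of Section~\ref{PEQ}, and the interior/initial region $\{|x|<t/2\}\cup\{t<1\}$, where $\jb{t-|x|}$ and $\jb{t+|x|}$ are comparable and the target decay follows from Lemma~\ref{lem3_03} together with the support property \eqref{supp_t} (for the nonlinear contribution) and Remark~\ref{HomDecay} (for the initial data part). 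The interior region is routine; all the work is in $\Lambda_{T,R}$.

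For $|\pa u|$ itself, set $\Phi:=U^{\rm T}\mathcal{A}(\omega)U$. The profile equation \eqref{ode_0} combined with condition (H) gives the key identity
\begin{align*}
\pa_+\Phi=-\frac{1}{t}U^{\rm T}\mathcal{A}(\omega)F^{\rm red}(\omega,U)+2U^{\rm T}\mathcal{A}(\omega)H=2U^{\rm T}\mathcal{A}(\omega)H,
\end{align*}
so the dangerous quadratic term in $U$ is annihilated. Integrating this ODE along the characteristic $\{r-t=\mathrm{const}\}$ from its entry into $\Lambda_{T,R}$ (where $\Phi=O(\eps^2)$, via either the initial surface or a preliminary interior estimate on $\{|x|=t/2\}$), using \eqref{BBA} and bounding the right-hand side by Lemma~\ref{lem4_01} and the bootstrap, I obtain $|U|\le C\eps$ throughout $\Lambda_{T,R}$. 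Lemma~\ref{lem3_01} converts this to $|\pa u|\le C\eps\jb{t+|x|}^{-1}$, and Lemma~\ref{lem3_02} trades one $\Gamma$-derivative to supply the desired $\jb{t-|x|}^{\mu-1}$ factor, with no $\eps$-growth -- this is exactly the sharp (zero-$\nu$) bound required by the first summand of $e[u](T)$.

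For the higher-order bound $|\pa u|_k$, set $\Phi_\alpha:=U^{(\alpha){\rm T}}\mathcal{A}(\omega)U^{(\alpha)}$ and use \eqref{ode_alpha} to get
\begin{align*}
\pa_+\Phi_\alpha=-\frac{1}{t}U^{(\alpha){\rm T}}\mathcal{A}(\omega)G_\alpha(\omega,U,U^{(\alpha)})+2U^{(\alpha){\rm T}}\mathcal{A}(\omega)H_\alpha.
\end{align*}
Condition (H) no longer annihilates the first term; however, polarizing $Y^{\rm T}\mathcal{A}(\omega)F^{\rm red}(\omega,Y)\equiv 0$ in the directions $U$, $U^{(\alpha)}$ yields the crucial identity
\begin{align*}
U^{(\alpha){\rm T}}\mathcal{A}(\omega)G_\alpha(\omega,U,U^{(\alpha)})=-U^{\rm T}\mathcal{A}(\omega)F^{\rm red}(\omega,U^{(\alpha)}),
\end{align*}
whose right-hand side is bounded by $C|U||U^{(\alpha)}|^2\le C\eps\,|U^{(\alpha)}|^2$ using the previous step. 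Consequently $\pa_+\Phi_\alpha\le C\eps t^{-1}\Phi_\alpha+2|U^{(\alpha){\rm T}}\mathcal{A}(\omega)H_\alpha|$, and Gronwall along the characteristic produces a growth factor $(t/t_0)^{C\eps}$, which for $\eps$ small enough that $C\eps<2\nu$ is absorbed into the $\jb{t+|x|}^{2\nu}$ budget allowed for $\Phi_\alpha$. The remainder $|H_\alpha|$ is controlled inductively on $|\alpha|$ via Lemma~\ref{lem4_01}, the bootstrap, and the identification of $\Gamma^\alpha u$ with $\int\pa_t\Gamma^\alpha u\,ds$ plus initial data; the product terms $t|\pa u|_{s-1}^2$ contribute up to $k+1$ nested $\nu$-growths, and the hypothesis $4(k+1)\nu\le\mu$ is exactly what makes these compatible with the required $\jb{t-|x|}^{\mu-1}$ decay.

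The main obstacle is the higher-order step. Condition (H) provides an \emph{exact} conservation for $U$, but only the inequality $|\langle U^{(\alpha)},G_\alpha\rangle_\mathcal{A}|\le C|U||U^{(\alpha)}|^2$ at the linearized level, forcing the $t^{C\eps}$ growth. Reconciling this with the sharp (no-$\nu$) decay demanded in the $|\alpha|=0$ component of $e[u](T)$, while still controlling the numerous source contributions of $H_\alpha$ -- including higher-derivative terms like $t^{-1}|u|_{s+2}$ that must be recovered by integrating $\pa_t\Gamma^\alpha u$ and using the support property -- is the delicate balancing act; the constraint $4(k+1)\nu\le\mu$ is precisely the book-keeping device that closes the circle.
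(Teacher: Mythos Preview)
Your overall architecture matches the paper, but there is a genuine gap: you dismiss the region off the light cone as ``routine'' and put ``all the work'' in $\Lambda_{T,R}$, whereas the paper's light-cone argument rests on a preliminary \emph{global} step you have not supplied. To invoke Lemma~\ref{lem4_01} for $H$ and $H_\alpha$ one needs pointwise control of $|u|_{s+2}$ up to $s=k$, i.e.\ of $|u|_{k+2}$---two orders beyond what the bootstrap on $|\pa u|_k$ (or time-integration of $\pa_t\Gamma^\alpha u$, which recovers at most $|u|_k$) can give. The paper obtains this by first running an energy hierarchy up to order $2k+1$, each level costing a factor $(1+t)^\nu$, and then feeding $\|\pa u\|_{2k+1}\le C\eps(1+t)^{2(k+1)\nu}$ into H\"ormander's $L^1$--$L^\infty$ estimate to get $|u|_{k+2}\le C\eps\jb{t+|x|}^{-1+\mu}$. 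The hypothesis $4(k+1)\nu\le\mu$ is used precisely \emph{here}, to cap the energy growth by $(1+t)^{\mu/2}$; your attribution of it to nested $\nu$-growths in the characteristic iteration is not the mechanism (that iteration produces $t^{2^{k-1}C^*\eps}$ growth, absorbed into $\jb{t+|x|}^\nu$ simply by taking $\eps$ small, with no $\mu$-constraint involved).

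A related gap: your route ``$|U|\le C\eps$, then Lemma~\ref{lem3_02} supplies the $\jb{t-|x|}^{\mu-1}$ factor'' does not close. Lemma~\ref{lem3_02} gives $|\pa u|\le C\jb{t-|x|}^{-1}|u|_1\le C\eps\jb{t+|x|}^{\mu-1}\jb{t-|x|}^{-1}$, which in $\Lambda_{T,R}$ is strictly weaker than the target $\jb{t+|x|}^{-1}\jb{t-|x|}^{\mu-1}$ (test $|t-|x||\sim t^{1/2}$). The paper instead carries the $\jb{\sigma}^{\mu-1}$ factor \emph{through} the ODE: on the entry surface $\Sigma$ one has $t_{0,\sigma}\sim\jb{\sigma}$, so the Step~1 bound yields $|V(t_{0,\sigma})|\le C\eps\jb{\sigma}^{\mu-1}$, and likewise $\int_{t_{0,\sigma}}^t|H|\,d\tau\le C\eps\jb{\sigma}^{\mu-1}$; hence $|U|\le C\eps\jb{t-|x|}^{\mu-1}$ directly, without Lemma~\ref{lem3_02}. (Your polarization identity $U^{(\alpha){\rm T}}\mathcal{A}G_\alpha=-U^{\rm T}\mathcal{A}F^{\rm red}(\omega,U^{(\alpha)})$ is correct and nice, but the paper does not need it: the crude bound $|G_\alpha|\le C|U|\,|U^{(\alpha)}|$ on the unweighted $|V^{(\alpha)}|^2$ already gives the same Gronwall factor.)
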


Once the above proposition is obtained, we can show the small data global existence 
for \eqref{eq}--\eqref{data} by the so-called continuity 
argument: 
Let $T^*$ be the lifespan of the classical solution for 
(\ref{eq})--(\ref{data}) and assume $T^*<\infty$. 
Then, it follows from the standard blow-up criterion (see e.g., \cite{sogge}) 
that 
\begin{align}
 \lim_{t\to T^*-0} \|\pa u(t, \cdot)\|_{L^\infty(\R^3)}=\infty.
 \label{blowup}
\end{align}
On the other hand, by setting
$$
T_*=\sup\left\{T\in [0,T^*)\,; e[u](T)\le \sqrt{\eps}  \right\},
$$
we can see that Proposition~\ref{lem_apriori} yields $T_*=T^*$,
provided that $\eps$ is small enough. Indeed, if $T_*<T^*$, then we have $e[u](T_*)\le\sqrt{\eps}$, and
Proposition~\ref{lem_apriori} implies that
$$
 e[u](T_*)\le m \eps \le \frac{\sqrt{\eps}}{2}
$$
for $0<\eps\le \min\{\eps_1, \eps_2, 1/4m^2\}$
(note that we have $T_*>0$ for $\eps\le \eps_1$). 
Then, by the continuity of  $[0,T^*) \ni T \mapsto e[u](T)$, 
we can take $\delta>0$ such that 
$$
 e[u](T_*+\delta) \le \sqrt{\eps}, 
$$
which contradicts the definition of $T_*$, and we conclude that $T_*=T^*$. 

In particular, we have 
$$
 e[u](T^*) \le \sqrt{\eps}. 
$$
This implies that \eqref{blowup} never occurs for small $\eps$. 
In other words, we must have $T^*=\infty$, 
that is, the solution $u$ exists globally for small data. 
We also note that
\begin{equation}
\label{AE00}
e[u](\infty) \le \sqrt{\eps}
\end{equation}
holds for this global solution $u$, and
Proposition~\ref{lem_apriori} again yields
\begin{equation}
 \label{AE}
 e[u](\infty)\le m\eps.
\end{equation}

Now we turn to the proof of Proposition~\ref{lem_apriori}.  
It will be divided into several steps.
\begin{proof}[Proof of Proposition~\ref{lem_apriori}]
In what follows, we always suppose $0\le t<T$. 

\noindent{\bf Step 1: 
Rough bounds for $|u(t,x)|_{k+2}$ and 
$|\pa u(t,x)|_{k+1}$.}

First of all, we will establish the following energy estimates: 
\begin{align}
 \|\pa u(t)\|_{l} \le C \eps (1+t)^{C_* \sqrt{\eps} + l\nu}
 \label{est_energy}
\end{align}
for $l\in \{0,1, \ldots, 2k+1\}$, where $C_*$ is a positive constant 
to be fixed later. 

In preparation for the proof of \eqref{est_energy}, we make some observations: 
Let $0\le l\le 2k+1$.
In what follows we neglect terms including $|\pa u|_{l-1}$ or $\|\pa u\|_{l-1}$
when $l=0$.
From \eqref{Comm01}, \eqref{Comm03}, and the standard energy inequality, we get
\begin{equation}
 \|\pa u(t)\|_l
 \le 
 C_{1,l}\left(\|\pa u(0)\|_l
 +
 \int_0^t\left\|F\bigl(\pa u(\tau)\bigr)\right\|_l d\tau\right),
 \label{Ene01}
\end{equation}
where $C_{1,l}$ is a positive constant depending only on $l$.
From \eqref{est_before} we have
 $|\pa u(t,x)|
 \le 
 \sqrt{2\eps}(1+t)^{-1}$ and $|\pa u(t,x)|_k\le \sqrt{2\eps}(1+t)^{\nu-1}$, 
since $\jb{t+|x|}^{-1} \le \sqrt{2} (1+t)^{-1}$.
Hence we get
\begin{align*}
|F(\pa u)|_l
 \le & 
 C_{2,l}\left(|\pa u|\,|\pa u|_l+|\pa u|_{[l/2]}|\pa u|_{l-1}\right) \\
 \le & 
 C_{2,l}\sqrt{2\eps}\left((1+t)^{-1}|\pa u|_l+(1+t)^{\nu-1}|\pa u|_{l-1}\right)
\end{align*}
with a positive constant $C_{2,l}$ depending only on $l$, which leads to
\begin{equation}
 \left\|F\bigl(\pa u(t)\bigr)\right\|_l
 \le 
 \sqrt{2}C_{2,l}\sqrt{\eps} \left((1+t)^{-1}\|\pa u(t)\|_l
 +
 (1+t)^{\nu-1}\|\pa u(t)\|_{l-1}\right).
\label{Ene02}
\end{equation}

Now we put $C_*=\max_{0\le l\le 2k+1} \sqrt{2}C_{1,l} C_{2, l}$, 
and we shall prove \eqref{est_energy} by induction on $l$. 
If $l=0$, it follows from \eqref{Ene01} and \eqref{Ene02} that
\begin{align*}
 \|\pa u(t)\|_{0} 
 &\le 
 C\eps + C_*\sqrt{\eps} 
 \int_0^t (1+\tau)^{-1}\|\pa u(\tau)\|_{0}d\tau,
\end{align*}
whence the Gronwall lemma implies 
$$
 \|\pa u(t)\|_0 
 \le 
  C\eps (1+t)^{C_*\sqrt{\eps}}.
$$
Next we assume that \eqref{est_energy} holds for some 
$l\in \{0,1, \ldots, 2k\}$. 
Then it follows from \eqref{Ene01} and \eqref{Ene02} that
\begin{align*}
 \|\pa u(t)\|_{l+1} 
 \le & 
 C\eps+C_*\sqrt{\eps}\int_{0}^{t} \left((1+\tau)^{-1}\|\pa u(\tau)\|_{l+1}
 {}+(1+\tau)^{-1+\nu} \|\pa u(\tau)\|_{l}\right)
 d\tau
\\
 \le &
 C\eps + C_*\sqrt{\eps} \int_{0}^{t} (1+\tau)^{-1}\|\pa u(\tau)\|_{l+1}d\tau\\
 & {}+C \eps^{3/2} \int_{0}^{t} (1+\tau)^{-1+C_*\sqrt{\eps}+(l+1)\nu}d\tau\\
 \le & 
 C\eps + C_*\sqrt{\eps} \int_{0}^{t} (1+\tau)^{-1}\|\pa u(\tau)\|_{l+1}d\tau
 + C \eps^{3/2}(1+t)^{C_*\sqrt{\eps}+(l+1)\nu},
\end{align*}
which yields
\begin{align*}
  \|\pa u(t)\|_{l+1} 
 \le 
 C\eps (1+t)^{C_* \sqrt{\eps}}
 + 
 C\eps^{3/2} (1+t)^{C_*\sqrt{\eps}+(l+1)\nu}
 \le 
 C\eps (1+t)^{C_* \sqrt{\eps}+(l+1)\nu}.
\end{align*}
This means that \eqref{est_energy} remains true when $l$ is replaced by $l+1$, 
and \eqref{est_energy} has been proved for all $l\in \{0,1,\ldots, 2k+1\}$. 

From now on, we assume  that $\eps\le \nu^2/C_*^2$. 
Then, since $k\ge 3$ and $2(k+1)\nu \le \mu/2$, 
it follows from \eqref{est_energy} with $l=2k+1$ that 
\begin{equation}
 \|\pa u(t)\|_{k+4} 
 \le 
 \|\pa u(t)\|_{2k+1} 
 \le 
 C \eps \jb{t}^{2(k+1)\nu}
 \le 
 C \eps \jb{t}^{\mu/2}
\label{Ene03}
\end{equation}
and
$$
 \bigl\| |F(\pa u(t,\cdot))|_{k+4} \bigr\|_{L^1(\R^3)} 
 \le  C\|\pa u(t)\|_{k+4}^2 
\le C \eps^2 \jb{t}^{\mu}.
$$
Hence Lemma~\ref{lem3_03} and Remark~\ref{HomDecay} yield 
\begin{align*}
 \jb{t+|x|} |u(t,x)|_{k+2} 
 \le & C_R\|\pa u(0)\|_{k+4}+C\int_0^t \frac{\bigl\| |F(\pa u(\tau))|_{k+4}\bigr\|_{L^1}}{\jb{\tau}}d\tau\\
\le & C \eps + C\eps^2  \int_0^t  \jb{\tau}^{\mu-1}d\tau
 \le 
 C\eps \jb{t+|x|}^{\mu}, 
\end{align*} 
that is,
\begin{align}
 |u(t,x)|_{k+2} \le C \eps \jb{t+ |x|}^{-1+\mu}
\label{rough_bound_u}
\end{align}
for $(t,x) \in [0,T)\times \R^3$. By Lemma~\ref{lem3_02}, we also have 
\begin{align}
 |\pa u(t,x)|_{k+1} \le C \eps \jb{t+|x|}^{-1+\mu} \jb{t-|x|}^{-1}
\label{rough_bound_pa_u}
\end{align}
for $(t,x) \in [0,T)\times \R^3$. 
\medskip

\noindent{\bf Step 2: 
Estimates for $|\pa u(t,x)|_{k}$ away from the light cone.}

Now we put 
$\Lambda_{T,R}^{\rm c}:=\bigl([0,T)\times \R^3\bigr)\setminus \Lambda_{T,R}$,
where $R$ is the constant appearing in \eqref{supp_0}.
In the case of $t/2<1$ or $|x|< t/2$, we see that 
$$
\jb{t-|x|}\le \jb{t+|x|}\le C\jb{t-|x|}.
$$
On the other hand, it follows from \eqref{supp_t} that $u(t,x)=0$ if $|x|>t+R$.
Hence \eqref{rough_bound_pa_u} implies
\begin{align}
 \sup_{(t,x)\in \Lambda_{T,R}^c} \jb{t+|x|} \jb{t-|x|}^{1-\mu}|\pa u(t,x)|_{k}
 \le C\eps.
\label{est_after_01}
\end{align}
\medskip

\noindent{\bf Step 3: 
Estimates for $|\pa u(t,x)|$ near the light cone.}

Let $(t,x) \in \Lambda_{T,R}$ throughout this step.
Remember that $t^{-1}$, $|x|^{-1}$, $\jb{t}^{-1}$, and $\jb{t+|x|}^{-1}$ are 
equivalent to each other in $\Lambda_{T,R}$. 
We define $U$, $U^{(\alpha)}$, $H$, $H_\alpha$ and $|\,\cdot\,|_{\sharp, s}$ 
as in Section~\ref{PEQ} (see \eqref{U}, \eqref{DefRemainder}, \eqref{DefUAlpha},
\eqref{DefHAlpha}, and \eqref{norm_sharp}). 
We see from  \eqref{rough_bound_u} and \eqref{rough_bound_pa_u} that 
\begin{align}
 |u(t,x)|_{\sharp,k}\le C\eps t^{\mu-1}\jb{t-|x|}^{-1}.
 \label{est_u_sharp01}
\end{align}
By \eqref{Comm03}, \eqref{est_pa_t}, \eqref{est_pa_j}, 
and \eqref{rough_bound_u},  we have
\begin{align}
t|\pa u(t,x)|_l
 \le & 
 C \sum_{|\alpha|\le l} \bigl|\,|x| \pa \Gamma^\alpha u(t,x) \bigr| 
 \nonumber\\
 \le & 
 C\sum_{|\alpha|\le l} |U^{(\alpha)}(t,x)| + C\eps t^{\mu-1}
 \label{est_pa_u_02}
\end{align}
for $l\le k$. 
Also, it follows from \eqref{rough_bound_u}, \eqref{est_u_sharp01}, and 
Lemma~\ref{lem4_01} that 
\begin{align}
|H(t,x)|
 \le 
 C\left(\eps^2 t^{2\mu-2}\jb{t-|x|}^{-1}
 + 
 \eps t^{\mu-2}\right)
 \le 
C\eps t^{2\mu-2}\jb{t-|x|}^{-\mu}.
 \label{est_H}
\end{align}
Next we put 
$$
\Sigma=\{(t,x) \in \Lambda_{T,R}\, ;\, t/2=1\ \ \mbox{or}\ \ t/2=|x| \}
$$
and we define $t_{0, \sigma}=\max\{2, -2\sigma\}$ for $\sigma\le R$.  
What is important here is that the line segment 
$\{(t, (t+\sigma)\omega);\, 0\le t< T\}$ meets $\Sigma$ at the point 
$\left(t_{0, \sigma}, (t_{0,\sigma}+\sigma)\omega\right)$ for each 
fixed $(\sigma,\omega) \in (-\infty, R]\times \Sph^2$.  
We also remark that 
\begin{align}
 C^{-1}\jb{\sigma} \le t_{0,\sigma} \le C \jb{\sigma}, \quad \sigma\le R.
 \label{t_zero}
\end{align}
When $(t,x)\in \Sigma$, 
we have $t^{\mu}\le C\jb{t-|x|}^{\mu}$. 
So it follows from \eqref{est_U_alpha_01} and \eqref{est_u_sharp01} that 
\begin{align}
 \sum_{|\alpha|\le k}|U^{(\alpha)}(t,x)|
 \le 
 C\eps t^{\mu} \jb{t-|x|}^{-1}
 \le 
 C\eps \jb{t-|x|}^{\mu-1},
\quad (t,x)\in \Sigma.
\label{est_U_alpha}
\end{align}

Now we define
\begin{align}
V(t; \sigma,\omega)=\bigl(V_1(t;\sigma,\omega),\ldots,V_N(t;\sigma,\omega)\bigr)^{\rm T}=U\bigl(t, (t+\sigma)\omega\bigr)
\label{profile_v}
\end{align}
for $0\le t< T$ and 
$(\sigma,\omega) \in \R\times \Sph^2$. In what follows, we fix $(\sigma, \omega)\in (-\infty, R]\times \Sph^2$ and write $V(t)$ for $V(t;\sigma, \omega)$. 
Then, since the profile equation \eqref{ode_0} is rewritten as 
\begin{align}
 \frac{\pa V}{\pa t}(t)=(\pa_+U)\bigl(t, (t+\sigma)\omega\bigr)
 = 
 -\frac{1}{2t}F^{\rm red}(\omega, V(t))+H(t, (t+\sigma)\omega) 
 \label{ode_V}
\end{align}
for $t_{0,\sigma} < t< T$, it follows from the condition (H) that 
\begin{align}
 \frac{\pa}{\pa t}\left(V(t)^{\rm T}{\mathcal A}(\omega)V(t)\right)
 &=2 V(t)^{\rm T} {\mathcal A}(\omega)
   \frac{\pa V}{\pa t}(t) \nonumber\\
   &=2V(t)^{\rm T}{\mathcal A}(\omega)\left(-\frac{1}{2t}F^{\rm red}\bigl(\omega, V(t)\bigr)+H\bigl(t, (t+\sigma)\omega\bigr)\right)\nonumber\\ 
 &=2 V(t)^{\rm T}{\mathcal A}(\omega) H(t,(t+\sigma)\omega)
    \nonumber\\
 &\le 
 C|V(t)|\,|H(t,(t+\sigma)\omega)|\nonumber\\
 &\le C\sqrt{V(t)^{\rm T}{\mathcal A}(\omega)V(t)}|H(t,(t+\sigma)\omega)|
\label{star02}
\end{align}
for $t_{0,\sigma} < t< T$,
where we have used \eqref{BBA} to obtain the last line.
We also note that \eqref{est_U_alpha} for $k=0$ can be interpreted as
\begin{align}
 |V(t_{0,\sigma})|=\left|U\bigl(t_{0,\sigma}, (t_{0,\sigma}+\sigma)\omega\bigr)\right|
 \le 
 C\eps \jb{\sigma}^{\mu-1}.
 \label{est_V_ini}
\end{align}
From \eqref{BBA}, \eqref{est_H}, \eqref{t_zero}, \eqref{star02}, and 
\eqref{est_V_ini} we get
\begin{align}
 |V(t)| & \le \sqrt{M_0} \sqrt{V(t)^{\rm T}{\mathcal A}(\omega)V(t)} 
 \nonumber\\
 &\le C\left(
 \sqrt{V(t_{0,\sigma})^{\rm T}{\mathcal A}(\omega)V(t_{0,\sigma})} 
 +
 \int_{t_{0,\sigma}}^{t} |H(\tau,(\tau + \sigma)\omega)|d\tau\right)
 \nonumber\\
 &\le 
 C\eps \jb{\sigma}^{\mu-1}
 +
 C\eps\jb{\sigma}^{-\mu}
 \int_{t_{0,\sigma}}^{t} {\tau}^{2\mu-2} d\tau
 \nonumber\\
 &\le 
 C\eps \left(\jb{\sigma}^{\mu-1}
 +
 \jb{\sigma}^{-\mu} t_{0,\sigma}^{2\mu-1}
 \right)
 \nonumber\\
 &\le 
 C\eps \jb{\sigma}^{\mu-1}
\label{est_V}
\end{align}
for $t\ge t_{0,\sigma}$, 
where $C$ is  independent of $\eps$, $\sigma$, and $\omega$. 

\eqref{est_V} implies 
$$
 |U(t,x)|=|V(t; |x|-t,x/|x|)|\le C\eps\jb{t-|x|}^{\mu-1},
\quad (t,x)\in \Lambda_{T,R}.
$$
Finally, in view of \eqref{est_pa_u_02} with $l=0$, we obtain
\begin{align}
  \sup_{(t,x)\in \Lambda_{T,R}} \jb{t+|x|}\jb{t-|x|}^{1-\mu} |\pa u(t,x)|
  \le C\eps.
  \label{est_after_02}
\end{align}
We remark that the derivation of \eqref{star02} is the only point
where we make use of the condition (H) throughout this proof.
\medskip

\noindent{\bf Step 4: 
Estimates for $|\pa u(t,x)|_{k}$ near the light cone.}

We assume $(t,x)\in \Lambda_{T, R}$ also in this step.
For a non-negative integer $s$, we set 
$$
{\mathcal U}^{(s)}(t,x):=\sum_{|\beta|\le s}|U^{(\beta)}(t,x)|.
$$
Let $1\le |\alpha|\le k$. 
By \eqref{est_pa_u_02} we get
\begin{align}
 |\pa u(t,x)|_{|\alpha|-1}
 \le 
 C\left(t^{-1}{\mathcal U}^{(|\alpha|-1)}(t,x)+\eps t^{\mu-2}\right).
 \label{est_pa_u_03}
\end{align}
It follows from \eqref{rough_bound_u}, \eqref{est_u_sharp01}, 
\eqref{est_pa_u_03}, and Lemma~\ref{lem4_01} that
\begin{align}
|H_\alpha(t,x)|
 \le& 
 C\left(
  \eps^2 t^{2\mu-2}\jb{t-|x|}^{-1}
  + 
  \eps t^{\mu-2}+\eps^2t^{2\mu-3}
  +
  t^{-1} \bigl({\mathcal U}^{(|\alpha|-1)}(t,x)\bigr)^2
 \right)\nonumber\\
 \le& 
 C\eps t^{2\mu-2}\jb{t-|x|}^{-\mu}
 +
 Ct^{-1}\bigl({\mathcal U}^{(|\alpha|-1)}(t,x)\bigr)^2.
 \label{est_H_alpha}
\end{align} 
We put
$$
 V^{(\alpha)}(t; \sigma, \omega)=U^{(\alpha)}\bigl(t, (t+\sigma)\omega\bigr)
$$
for $0 \le t< T$ and $(\sigma,\omega) \in (-\infty, R]\times \Sph^2$.
We fix $(\sigma, \omega)\in (-\infty, R]\times \Sph^2$ and write
$V^{(\alpha)}(t)$ for $V^{(\alpha)}(t; \sigma, \omega)$.
Then \eqref{ode_alpha} is rewritten as 
\begin{align*}
\frac{\pa V^{(\alpha)}}{\pa t}(t)
= -\frac{1}{2t} G_\alpha\left(\omega, V(t), V^{(\alpha)}(t)\right)
+H_\alpha(t, (t+\sigma) \omega)
\end{align*}
for $t_{0,\sigma} < t< T$. Hence by \eqref{est_V} and \eqref{est_H_alpha} 
we obtain
\begin{align*}
 \frac{\pa}{\pa t}|V^{(\alpha)}(t)|^2
 \le& 
 \frac{C}{t}  |V(t)|\,|V^{(\alpha)}(t)|^2
 +2|H_\alpha(t, (t+\sigma) \omega)|\, |V^{(\alpha)}(t)|\\
 \le& 
 \frac{2C^* \eps}{t} |V^{(\alpha)}(t)|^2
 +
 C\left(
  \eps t^{2\mu-2}\jb{\sigma}^{-\mu}
  + 
  t^{-1}\bigl({\mathcal V}^{(|\alpha|-1)}(t)\bigr)^2
 \right)\,|V^{(\alpha)}(t)|,
\end{align*}
where 
$$
 {\mathcal V}^{(s)}(t)\bigl(={\mathcal V}^{(s)}(t;\sigma,\omega)\bigr)
 :=
 \sum_{|\beta|\le s}|V^{(\beta)}(t;\sigma, \omega)|,
$$
and $C^*$ is a positive constant independent of $\alpha$. 
Therefore it follows from \eqref{t_zero} and \eqref{est_U_alpha} that
\begin{align*}
 t^{-C^*\eps}|V^{(\alpha)}(t)|
 \le& 
 t_{0,\sigma}^{-C^*\eps}
 |V^{(\alpha)}(t_{0,\sigma})|
 +
 C\eps \jb{\sigma}^{-\mu} \int_{t_{0,\sigma}}^t \tau^{-C^*\eps+2\mu-2}d\tau
 \\
 &{}+
 C\int_{t_{0,\sigma}}^t \tau^{-C^*\eps-1}
 \bigl({\mathcal V}^{(|\alpha|-1)}(\tau)\bigr)^2 d\tau
 \\
 \le & 
 C\eps\jb{\sigma}^{\mu-1}
 +
 C\int_{2}^t \tau^{-C^*\eps-1}
 \bigl({\mathcal V}^{(|\alpha|-1)}(\tau)\bigr)^2 d\tau.
\end{align*}
To sum up with respect to $|\alpha|\le l$, 
we have
$$
 t^{-C^*\eps}{\mathcal V}^{(l)}(t)
 \le 
  C\eps\jb{\sigma}^{\mu-1}+C\int_{2}^t \tau^{-C^*\eps-1}
  \bigl({\mathcal V}^{(l-1)}(\tau)\bigr)^2 d\tau
$$
for $l \in \{1, \ldots, k\}$. 
Using this inequality, we can show inductively that
\begin{align}
 {\mathcal V}^{(l)}(t)
 \le 
 C \eps \jb{\sigma}^{\mu-1} t^{2^{l-1} C^* \eps} 
\label{est_V_l}
\end{align}
for $t_{0,\sigma}\le t < T$ and $l \in \{1, \ldots, k\}$. 
Indeed, we already know that 
$$
 {\mathcal V}^{(0)}(t)= |V(t)|
\le C\eps\jb{\sigma}^{\mu-1} 
$$
by \eqref{est_V}. Hence we have 
\begin{align*}
 t^{-C^*\eps}{\mathcal V}^{(1)}(t)
 \le  
 C\eps\jb{\sigma}^{\mu-1}
 +
 C\eps^2\jb{\sigma}^{2\mu-2}
 \int_{2}^\infty \tau^{-C^*\eps-1} d\tau
 \le 
 C\eps\jb{\sigma}^{\mu-1},
\end{align*}
which implies \eqref{est_V_l} for $l=1$. 
Next we suppose that \eqref{est_V_l} is true for some 
$l \in \{1, \ldots, k-1\}$. Then we have
\begin{align*}
 t^{-C^*\eps}{\mathcal V}^{(l+1)}(t)
 \le& 
  C\eps\jb{\sigma}^{\mu-1}+C \eps^2\jb{\sigma}^{2\mu-2}
  \int_{2}^t \tau^{(2^{l}-1)C^*\eps-1} d\tau\\
 \le& 
  C\eps \jb{\sigma}^{\mu-1} t^{(2^{l}-1)C^* \eps},
\end{align*}
which yields \eqref{est_V_l} with $l$ replaced by $l+1$. 
Hence \eqref{est_V_l} for $l\in \{1,\ldots,k\}$ 
has been proved.

By \eqref{est_pa_u_02} and \eqref{est_V_l} with $l=k$, we have 
\begin{align*}
|\pa u(t,x)|_k
\le 
C\eps \jb{t+|x|}^{-1+2^{k-1}C^*\eps}\jb{t-|x|}^{-1+\mu},
\quad (t,x)\in \Lambda_{T,R}.
\end{align*}
Finally we take $\eps \le 2^{1-k}\nu/C^*$ to obtain
\begin{align}
 \sup_{(t,x)\in \Lambda_{T, R}} 
 \jb{t+|x|}^{1-\nu}\jb{t-|x|}^{1-\mu}|\pa u(t,x)|_k
  \le C\eps.
 \label{est_after_03}
\end{align}
\medskip
 
\noindent{\bf The final step.}

By \eqref{est_after_01}, \eqref{est_after_02}, and \eqref{est_after_03}, 
we see that there exist two positive constants 
$\eps_2$ and $m$ such that \eqref{est_after} holds for $0<\eps\le \eps_2$. 
This completes the proof of Proposition~\ref{lem_apriori}.
\end{proof}

\section{Asymptotics for the solution to the profile equation} 
\label{ASPE}

This section is devoted to preliminaries for the proof of 
Theorem~\ref{thm_asymp}.
We assume $N=2$ and \eqref{TypicalExample} with $c_0>0$
throughout this section.
Let $u=(u_1, u_2)^{\rm T}$ be the global solution to \eqref{eq}--\eqref{data},
whose existence
is guaranteed by Theorem~\ref{thm_sdge} for small $\eps$,
and let $U=(U_1,U_2)^{\rm T}$ be given by \eqref{U}.
For simplicity of exposition, we introduce a complex-valued function
\begin{equation}
\label{def_U_0}
 \co{U}(t,x):=\sqrt{c_0}U_1(t,x)+ic_0 U_2(t,x),
\end{equation}
where $i=\sqrt{-1}$. Then it follows from \eqref{TypicalExample} and 
\eqref{ode_0} that
\begin{equation}
\label{ode_W}
\pa_+ \co{U}(t,x)
=
-\frac{i}{2t}c(\omega)\left(\realpart \co{U}(t,x)\right)
\co{U}(t,x)+\co{H}(t,x)
\end{equation}
with $\co{H}=\sqrt{c_0}H_1+ic_0 H_2$,
where $c(\omega)$ is given by \eqref{Index}, and $H=(H_1, H_2)^{\rm T}$
by \eqref{DefRemainder}. 

Let $t_0\ge 1$. 
Keeping the application to the profile equation \eqref{ode_W} in mind, 
we consider the following ordinary differential equation for $t> t_0$: 
\begin{align}
 i\frac{dz}{dt}(t)=\frac{\Phi(z(t))}{t}z(t) + J(t), 
 \label{ode_z}
\end{align}
where $\Phi:\C\to \R$  satisfies
\begin{equation}
 |\Phi({z})-\Phi({w})| \le C_0 |{z}-{w}|
 \text{ for } z, w \in \C
\label{assump01}
\end{equation}
with a positive constant $C_0$, and $J:[t_0,\infty)\to \C$ satisfies
\begin{equation}
 |J(t)| \le E_0 t^{-1-\lambda}
\label{assump02}
\end{equation}
with positive constants $E_0$ and $\lambda$. 
The important structure here is that $\Phi$ is real-valued.
Concerning the asymptotics for the solution $z(t)$ of \eqref{ode_z}, we have
the following lemma.
\begin{lem}\label{lem6_001}
Let $z(t)$ be the global solution of \eqref{ode_z}, and suppose 
$$
C_0(E_0t_0^{-\lambda}+|z(t_0)|\lambda)<\lambda^2.
$$
Then there is a $C^1$-function $p=p(s)$ on $[\log t_0,\infty)$ such that
we have 
\begin{align}
 &|z(t)-p(\log t)| 
 \le 
 \frac{E_0\lambda }
 {\{\lambda^2-C_0(E_0t_0^{-\lambda}+|z(t_0)|\lambda)\} t^{\lambda}}, 
 \quad t\ge t_0,
\label{est_z-p}
\end{align}
and 
\begin{equation}
 i\frac{dp}{ds}(s)=\Phi\bigl(p(s)\bigr) p(s),\quad s\ge \log t_0.
\label{ProfEq}
\end{equation}
\end{lem}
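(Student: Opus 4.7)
The strategy is to construct $p$ by extracting the asymptotic phase of $z$ and then estimate the error via a contraction argument, exploiting the fact that $\Phi$ is real-valued (so that \eqref{ProfEq} preserves $|p|$). First I would set $\Theta(t):=\int_{t_0}^{t} \Phi(z(\tau))/\tau\,d\tau\in\R$ and $\zeta(t):=z(t)e^{i\Theta(t)}$; differentiating using \eqref{ode_z} gives $\zeta'(t)=-ie^{i\Theta(t)}J(t)$, so $|\zeta'|\le|J|\le E_0 t^{-1-\lambda}$, and hence $\zeta_\infty:=\lim_{t\to\infty}\zeta(t)$ exists with $|\zeta(t)-\zeta_\infty|\le E_0/(\lambda t^\lambda)$ and $|\zeta_\infty|\le |z(t_0)|+E_0/(\lambda t_0^\lambda)$. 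The smallness hypothesis then forces $\kappa:=C_0|\zeta_\infty|/\lambda<1$.

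Next, I would seek $P(t):=p(\log t)$ in the form $P(t)=\zeta_\infty e^{-i\widetilde\Theta(t)}$ with $\widetilde\Theta:[t_0,\infty)\to\R$. A direct computation shows \eqref{ProfEq} holds if and only if $\widetilde\Theta'(t)=\Phi(P(t))/t$. Writing $\Delta:=\widetilde\Theta-\Theta$ and imposing the terminal condition $\Delta(t)\to 0$ as $t\to\infty$ reduces the problem to the fixed-point equation
\[
\Delta(t)=-\int_{t}^{\infty}\frac{\Phi\bigl(\zeta_\infty e^{-i\Theta(\tau)-i\Delta(\tau)}\bigr)-\Phi(z(\tau))}{\tau}\,d\tau=:(\mathcal F\Delta)(t).
\]

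I would solve $\Delta=\mathcal F\Delta$ by Banach's fixed-point theorem in $X_B:=\{\Delta\in C([t_0,\infty),\R):\|\Delta\|_X:=\sup_{t\ge t_0} t^\lambda|\Delta(t)|\le B\}$. Combining \eqref{assump01} with the identity $|\zeta_\infty e^{-i\Theta-i\Delta}-z|=|\zeta_\infty e^{-i\Delta}-\zeta|\le|\zeta_\infty|\,|\Delta|+|\zeta-\zeta_\infty|$ yields both the self-map bound $\|\mathcal F\Delta\|_X\le\kappa B+C_0 E_0/\lambda^2$ and the contraction bound $\|\mathcal F\Delta_1-\mathcal F\Delta_2\|_X\le\kappa\|\Delta_1-\Delta_2\|_X$. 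Since $\kappa<1$, both constraints are met by choosing $B:=C_0 E_0/(\lambda^2-C_0|\zeta_\infty|\lambda)$, producing a unique fixed point $\Delta$ and hence a $C^1$ function $p$ satisfying \eqref{ProfEq}.

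Finally, $|z(t)-p(\log t)|=|\zeta(t)-\zeta_\infty e^{-i\Delta(t)}|\le|\zeta-\zeta_\infty|+|\zeta_\infty|\,|\Delta(t)|$; substituting the two $O(t^{-\lambda})$ bounds and simplifying yields $E_0\lambda/\bigl[(\lambda^2-C_0|\zeta_\infty|\lambda)t^\lambda\bigr]$, and replacing $C_0|\zeta_\infty|\lambda$ by its upper bound $C_0(E_0 t_0^{-\lambda}+|z(t_0)|\lambda)$ reproduces \eqref{est_z-p}. The main obstacle is the constant-tracking: the arithmetic must be arranged so that the smallness hypothesis enters in precisely the single denominator of the final estimate, and so that $\kappa<1$ is obtained sharply from the assumption on $C_0(E_0t_0^{-\lambda}+|z(t_0)|\lambda)$. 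Once the phase decomposition via $\zeta$ is in place, the contraction argument itself is entirely routine.
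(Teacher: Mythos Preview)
Your argument is correct, and the underlying idea is the same as the paper's: extract the limit $\zeta_\infty=\lim_{t\to\infty}z(t)e^{i\Theta(t)}$, then determine the exact phase so that $p(\log t)=\zeta_\infty e^{-i\widetilde\Theta(t)}$ solves \eqref{ProfEq}. The difference is in how the second step is carried out. You solve for the phase correction $\Delta=\widetilde\Theta-\Theta$ directly via Banach's fixed-point theorem in the weighted space $\{\Delta:\sup_t t^\lambda|\Delta(t)|<\infty\}$, whereas the paper runs an explicit successive-approximation scheme: starting from $z_0=z$ it sets $\Theta_n(t)=\int_{t_0}^t\Phi(z_n)\,d\tau/\tau$, $\zeta_n=\lim_{t\to\infty}z_n e^{i\Theta_n}$, $z_{n+1}=\zeta_n e^{-i\Theta_n}$, and proves (in a separate lemma) that $|z_{n+1}-z_n|\le(E_0/\lambda t^\lambda)K^n$ with $K=C_0(E_0 t_0^{-\lambda}+|z(t_0)|\lambda)/\lambda^2$. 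The two constructions are in fact the same iteration in disguise: one checks that the Picard iterates of your map $\mathcal F$ starting from $\Delta\equiv 0$ satisfy $\zeta_\infty e^{-i\Theta-i\Delta_n}=z_{n+1}$. Your packaging is more concise and avoids a separate inductive lemma; the paper's hand-crafted iteration makes the geometric-series constant visible at each step, which is why the same sharp constant in \eqref{est_z-p} appears at the end in both approaches.
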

To prove Lemma~\ref{lem6_001}, we introduce some sequences.
For the solution $z(t)$ of (\ref{ode_z}), we define sequences 
$\{z_n(t)\}_{n=0}^{\infty}$, $\{\Theta_n(t)\}_{n=0}^\infty$, 
and $\{\zeta_n\}_{n=0}^\infty$ in the following way:
We set $z_0(t)=z(t)$, and inductively define
\begin{align}
\Theta_n(t)
 =& 
 \int_{t_0}^{t} \Phi\bigl(z_{n}(\tau)\bigr) \frac{d\tau}{\tau}, 
 \quad\, t\ge t_0,
\label{def_Theta_n}\\
 \zeta_n
 =& 
 \lim_{\tau\to\infty} z_n(\tau) e^{i\Theta_n(\tau)}, 
\label{def_zeta_n}\\
 z_{n+1}(t)
 =&
 \zeta_n e^{-i\Theta_{n}(t)}, \qquad\qquad t\ge t_0
 \label{def_z_n}%
\end{align}
for $n\in \N_0$, where $\N_0$ denotes the set of non-negative integers. 
In order to see that this definition works well, we have only to check 
the convergence of $\lim_{\tau\to\infty} z_n(\tau)e^{i\Theta_n(\tau)}$ 
for each $n$.

\begin{lem}\label{lem6_002} 
The above sequences $\{z_n(t)\}_{n=0}^{\infty}$, 
$\{\Theta_n(t)\}_{n=0}^\infty$, and $\{\zeta_n\}_{n=0}^\infty$ are well-defined. 
Moreover we have 
\begin{align} 
 \zeta_n=& \left(
  z(t_0) -i\int_{t_0}^{\infty} J(\tau) e^{i\Theta_0(\tau)}d \tau
 \right) \nonumber\\
 & \times \exp\left(
    i\int_{t_0}^{\infty} 
    \left\{\Phi\bigl(z_n(\tau)\bigr)- \Phi\bigl(z_0(\tau)\bigr) \right\}
   \frac{d \tau}{\tau}
 \right)
 \label{first}
\end{align}
and 
\begin{align} 
 |z_{n+1}(t)-z_{n}(t)| \le \frac{E_0}{\lambda t^{\lambda}} 
 \left(\frac{C_0(E_0t_0^{-\lambda}+|z(t_0)|\lambda)}{\lambda^2}\right)^{n}
 \label{second}
\end{align}
for $n \in \N_0$.
\end{lem}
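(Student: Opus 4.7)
The plan is to induct on $n \in \N_0$, proving both \eqref{first} and \eqref{second} simultaneously. The key structural point is that although $z_0 = z$ satisfies the inhomogeneous ODE \eqref{ode_z}, each $z_{n+1}(t) = \zeta_n e^{-i\Theta_n(t)}$ satisfies, by the very definition of $\Theta_n$, the linear homogeneous equation $i(dz_{n+1}/dt)(t) = \Phi(z_n(t))z_{n+1}(t)/t$. Thus the source term $J$ enters only at the base step; for $n \ge 1$ the iteration is driven purely by the Lipschitz constant $C_0$ and acquires a geometric factor at every level.

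\textbf{Base case.} Differentiating $z(t)e^{i\Theta_0(t)}$ and substituting \eqref{ode_z} gives $\frac{d}{dt}\bigl[z(t)e^{i\Theta_0(t)}\bigr] = -iJ(t)e^{i\Theta_0(t)}$, and \eqref{assump02} makes the right-hand side integrable on $[t_0,\infty)$. Hence $\zeta_0 = \lim_{\tau\to\infty} z(\tau)e^{i\Theta_0(\tau)}$ exists and equals $z(t_0) - i\int_{t_0}^\infty J(\tau)e^{i\Theta_0(\tau)}d\tau$, which is \eqref{first} at $n = 0$ with trivial exponential prefactor $1$, and yields $|\zeta_0| \le |z(t_0)| + E_0/(\lambda t_0^\lambda)$. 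Integrating the same identity from $t$ to $\infty$ instead gives $|z_1(t) - z_0(t)| \le E_0/(\lambda t^\lambda)$, which is \eqref{second} at $n = 0$.

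\textbf{Inductive step.} Assume the two claims hold through index $n - 1 \ge 0$, and abbreviate $r := C_0(E_0 t_0^{-\lambda} + |z(t_0)|\lambda)/\lambda^2$, which is $< 1$ by hypothesis. Then $z_n(t) = \zeta_{n-1}e^{-i\Theta_{n-1}(t)}$ is a continuous function of $t$ with $|z_n(t)| = |\zeta_{n-1}|$; applying \eqref{first} at the previous index (or the base bound when $n = 1$) together with $|\zeta_0| \le |z(t_0)| + E_0/(\lambda t_0^\lambda)$ yields the uniform bound $|z_n(t)| \le |z(t_0)| + E_0/(\lambda t_0^\lambda)$. Writing $z_n(\tau)e^{i\Theta_n(\tau)} = \zeta_{n-1}\exp\bigl(i\int_{t_0}^\tau[\Phi(z_n(s)) - \Phi(z_{n-1}(s))]ds/s\bigr)$, the Lipschitz assumption \eqref{assump01} together with \eqref{second} at the previous index gives
\[
\int_{t_0}^\infty \bigl|\Phi(z_n(s)) - \Phi(z_{n-1}(s))\bigr|\frac{ds}{s} \le \frac{C_0 E_0}{\lambda^2 t_0^\lambda}r^{n-1},
\]
so $\zeta_n$ exists and obeys the one-step recursion $\zeta_n = \zeta_{n-1}\exp\bigl(i\int_{t_0}^\infty[\Phi(z_n) - \Phi(z_{n-1})]d\tau/\tau\bigr)$. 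Iterating this recursion and telescoping $\sum_{k=1}^n[\Phi(z_k) - \Phi(z_{k-1})] = \Phi(z_n) - \Phi(z_0)$ in the exponent produces \eqref{first} at level $n$.

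\textbf{The difference bound.} For \eqref{second} at level $n$, start from
\[
z_{n+1}(t) - z_n(t) = e^{-i\Theta_n(t)}\bigl(\zeta_n - z_n(t)e^{i\Theta_n(t)}\bigr) = -e^{-i\Theta_n(t)}\int_t^\infty\frac{d}{d\tau}\bigl[z_n(\tau)e^{i\Theta_n(\tau)}\bigr]d\tau,
\]
and use $i(dz_n/d\tau) = \Phi(z_{n-1})z_n/\tau$ (valid for $n \ge 1$ since $z_n = \zeta_{n-1}e^{-i\Theta_{n-1}}$) to rewrite the integrand as $iz_n(\tau)[\Phi(z_n) - \Phi(z_{n-1})]\tau^{-1}e^{i\Theta_n(\tau)}$. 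Estimating with $|z_n| \le |\zeta_{n-1}|$, \eqref{assump01}, and the inductive \eqref{second} gives $|z_{n+1}(t) - z_n(t)| \le |\zeta_{n-1}|C_0 E_0 r^{n-1}/(\lambda^2 t^\lambda)$. The only genuinely delicate step is the constant bookkeeping: the identity $|\zeta_{n-1}|C_0/\lambda \le C_0(|z(t_0)|\lambda + E_0 t_0^{-\lambda})/\lambda^2 = r$ is exactly what supplies the extra factor of $r$ needed to match \eqref{second} at level $n$, and this is what dictates the precise form of the geometric ratio in the statement. The hypothesis $r < 1$ then guarantees geometric convergence, which is what will feed into Lemma~\ref{lem6_001}.
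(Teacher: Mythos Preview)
Your argument is correct and follows essentially the same inductive scheme as the paper: establish $\zeta_0$ and the $n=0$ difference bound from the integrating-factor identity for $z_0e^{i\Theta_0}$, then derive the one-step recursion $\zeta_n=\zeta_{n-1}\exp\bigl(i\int_{t_0}^\infty[\Phi(z_n)-\Phi(z_{n-1})]\,d\tau/\tau\bigr)$ and telescope; the paper obtains \eqref{second} via $|e^{ix}-1|\le|x|$ applied to $\zeta_{n-1}\bigl(e^{i\theta_{n-1}-i\Theta_n(t)}-e^{-i\Theta_{n-1}(t)}\bigr)$, while you integrate the derivative of $z_n e^{i\Theta_n}$, which is the same estimate in differential form. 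One small remark: the condition $r<1$ is a hypothesis of Lemma~\ref{lem6_001}, not of Lemma~\ref{lem6_002}, and indeed you never use it in your induction (the telescoped sum in the exponent is finite for any $r$); your concluding sentence correctly flags it as the input to Lemma~\ref{lem6_001} rather than to the present lemma.
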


\begin{proof}
We prove Lemma \ref{lem6_002} by the induction on $n$. 

First we consider the case of $n=0$. Since $z_0=z$, it follows from
\eqref{ode_z} that
$$
\left(z_0(t)e^{i\Theta_0(t)}\right)'=-i J(t) e^{i\Theta_0(t)},
$$
which yields
$$
 z_0(t)e^{i\Theta_0(t)}
 =
 z(t_0) -i\int_{t_0}^{t} J(\tau) e^{i\Theta_0(\tau)}d\tau.
$$
This shows that $z_0(\tau)e^{i\Theta_0(\tau)}$ converges as $\tau \to \infty$, 
and that \eqref{first} holds for $n=0$, because \eqref{assump02} implies
$J(\cdot) e^{i\Theta_0(\cdot)} \in L^1(t_0,\infty)$. 
As for (\ref{second}) with $n=0$, we have 
\begin{align*}
 \bigl(z_1(t)-z_0(t)\bigr)e^{i\Theta_0(t)}=
  \zeta_0-z_0(t)e^{i\Theta_0(t)}
 = -i\int_{t}^{\infty} J(\tau) e^{i\Theta_0(\tau)}d\tau,
\end{align*} 
whence
$$
 |z_1(t)-z_0(t)| 
 \le 
 \int_{t}^{\infty} |J(\tau)|d\tau
 \le \frac{E_0}{\lambda t^{\lambda}}.
$$
Note that by \eqref{assump02} we have
\begin{align}
 |\zeta_0|
 =&
 \left|z(t_0)-i\int_{t_0}^\infty J(\tau) e^{i\Theta_0(\tau)}d\tau\right|
 \le 
 |z(t_0)|+\frac{E_0}{\lambda t_0^\lambda}.
 \label{est_zeta_n}
\end{align}

Next we consider the case of $n=n_0+1$ under the assumption that
$\zeta_n$ for $n\le n_0$ are well-defined (thus $z_{n}(t)$ and $\Theta_{n}(t)$ 
for $n\le n_0+1$ are also well-defined), 
and that (\ref{first}) and (\ref{second}) are true for $n\le n_0$. 
We set 
$K=C_0(E_0t_0^{-\lambda}+|z(t_0)|\lambda)/\lambda^2$.
By \eqref{assump01} and \eqref{second} for $n=n_0$, we get
\begin{equation}
 \left| \Phi\bigl(z_{n_0+1}(t)\bigr)-\Phi\bigl(z_{n_0}(t)\bigr) \right| 
 \le 
 C_0 |z_{n_0+1}(t)-z_{n_0}(t)|
 \le 
 \frac{C_0 E_0}{\lambda t^{\lambda}}K^{n_0}.
\label{diff01}
\end{equation}
We put
$$
 \theta_{n_0}=\int_{t_0}^{\infty} 
 \left\{ \Phi\bigl(z_{n_0+1}(\tau)\bigr)-\Phi\bigl(z_{n_0}(\tau)\bigr) \right\}
 \frac{d\tau}{\tau},
$$
which has a finite value because of \eqref{diff01}. 
It also follows from \eqref{diff01} that
\begin{align}
 \bigl| \Theta_{n_0+1}(t)-\Theta_{n_0}(t)-\theta_{n_0} \bigr|
 \le &
 \int_{t}^{\infty} 
 \left| \Phi\bigl(z_{n_0+1}(\tau)\bigr) -\Phi\bigl(z_{n_0}(\tau)\bigr) \right|
 \frac{d\tau}{\tau} \nonumber\\
 \le &
  \frac{C_0 E_0}{\lambda^2 t^{\lambda}}K^{n_0}.
\label{diff02}
\end{align}
Now we obtain from \eqref{def_z_n} for $n=n_0$ and \eqref{diff02} that
\begin{align*}
 \zeta_{n_0+1}
 =&
 \lim_{\tau \to \infty} \bigl( z_{n_0+1}(\tau) e^{i\Theta_{n_0+1}(\tau)} \bigr)
 =  
 \zeta_{n_0} \exp\left(
  i\lim_{\tau\to\infty}\left(\Theta_{n_0+1}(\tau)-\Theta_{n_0}(\tau)\right)
 \right) \\
 = &
 \zeta_{n_0} e^{i\theta_{n_0}},
\end{align*}
which immediately leads to \eqref{first} for $n=n_0+1$ if we replace 
$\zeta_{n_0}$ by the right-hand side of \eqref{first} for $n=n_0$. 
Since $|\zeta_{n_0}|=|\zeta_0|$, it follows from \eqref{def_z_n}, 
\eqref{est_zeta_n}, and \eqref{diff02} that
\begin{align*}
 \bigl| z_{n_0+2}(t)-z_{n_0+1}(t) \bigr|
 &=
 \bigl| 
  \zeta_{n_0} e^{i\theta_{n_0}}e^{-i\Theta_{n_0+1}(t)} 
  -\zeta_{n_0} e^{-i\Theta_{n_0}(t)}
 \bigr|\\
 &\le 
 \bigl| \zeta_{n_0} \bigr| 
 \bigl| \theta_{n_0}-\Theta_{n_0+1}(t)+\Theta_{n_0}(t) \bigr|\\
 &\le
 \left(|z(t_0)|+\frac{E_0}{\lambda t_0^{\lambda}}\right) 
 \frac{C_0 E_0}{\lambda^2 t^{\lambda}} K^{n_0}
 \\
 & =
  \frac{E_0}{\lambda t^{\lambda}} K^{n_0+1},
\end{align*}
which is \eqref{second} for $n=n_0+1$. This completes the proof.
\end{proof}
Now we are in a position to prove Lemma~\ref{lem6_001}.
\begin{proof}[Proof of Lemma~$\ref{lem6_001}$]
Since $z_0$ is continuous on $[t_0,\infty)$, it follows from \eqref{def_Theta_n}
and \eqref{def_z_n} that each $z_n$ is also continuous on $[t_0,\infty)$.
We put $K=C_0(E_0t_0^{-\lambda}+|z(t_0)|\lambda)/\lambda^2$ as before. 
Then we have $0<K<1$ from the assumption. 
By (\ref{second}) we can easily show that $\{z_n(\cdot)\}_{n=0}^\infty$ 
is a uniform Cauchy sequence on $[t_0,\infty)$, and 
$\{z_n(\cdot)\}_{n=0}^\infty$ converges 
uniformly on $[t_0,\infty)$ as $n\to\infty$. 
Hence if we put 
$$
 p(s):=\lim_{n \to \infty}z_n(e^s), \quad s\ge \log t_0,
$$
$p$ is continuous on $[\log t_0,\infty)$.
Since we have $p(\log t)=\lim_{n\to\infty}z_n(t)$ and $0<K<1$, 
it follows from (\ref{second}) that 
\begin{align*}
 |z(t)-p(\log t)|=&\lim_{n\to\infty}|z_0(t)-z_n(t)| \\
 \le &
 \sum_{n=0}^{\infty}|z_{n+1}(t)-z_{n}(t)| 
 \le 
 \sum_{n=0}^{\infty} \frac{E_0}{\lambda t^{\lambda}} K^n  
 \le 
  \frac{E_0}{\lambda (1-K) t^{\lambda}},
\end{align*}
which is \eqref{est_z-p}.
 
To show \eqref{ProfEq}, we set 
$$
 \Theta_{\infty}(t)
 = \int_{t_0}^{t} \Phi\bigl( p(\log \tau)\bigr)\frac{d\tau}{\tau}
 = \int_{\log t_0}^{\log t} \Phi(p(\sigma))d\sigma,
$$
which is well-defined because
the integrands are continuous functions.
Then it follows that 
\begin{align*}
 |\Theta_{\infty}(t)-\Theta_n(t)|
 &\le
 \int_{t_0}^{t} C_0 | p(\log \tau)-z_n(\tau)|\frac{d\tau}{\tau}\\
 &\le 
 \int_{t_0}^{\infty} C_0 \sum_{j=n}^{\infty} 
 \frac{E_0}{\lambda \tau^{\lambda}} K^j \frac{d\tau}{\tau}\\
 &\le 
 \frac{C_0E_0 K^{n}}{\lambda^2(1-K) t_0^{\lambda}}, 
\end{align*}
whence 
${\lim_{n \to \infty}\Theta_n(t)=\Theta_{\infty}(t)}$. 
Similarly we can show
$$
 \lim_{n\to\infty} \int_{t_0}^\infty 
  \left\{\Phi\bigl(z_n(\tau)\bigr)-\Phi\bigl(z_0(\tau)\bigr)\right\} 
 \frac{d\tau}{\tau}
=
 \int_{t_0}^\infty
  \left\{\Phi\bigl(p(\log \tau)\bigr)-\Phi\bigl(z_0(\tau)\bigr)\right\}
 \frac{d\tau}{\tau},
$$
which implies that $\{\zeta_n\}$ converges as $n\to \infty$ with the help of 
\eqref{first} (note that \eqref{est_z-p} shows the existence of the integral 
on the right-hand side of the identity above). 
Thus, by setting ${\zeta_{\infty}=\lim_{n\to \infty} \zeta_n}$, 
we have 
\begin{align*}
 p(s)
 =
 \lim_{n\to\infty}\zeta_{n-1}e^{-i\Theta_{n-1}(e^s)}
 =
 \zeta_{\infty} e^{-i\Theta_{\infty}(e^s)}
 =
 \zeta_{\infty}
 \exp\left(-i\int_{\log t_0}^{s} \Phi\bigl(p(\sigma)\bigr)d\sigma\right). 
\end{align*}
By differentiation, we see that $p(s)$ solves the desired equation 
\eqref{ProfEq}. 
\end{proof}

In the remaining part of this section, we will apply Lemma~\ref{lem6_001} to 
the profile equation \eqref{ode_W}. 
We put 
\begin{equation}
\label{def_V_0}
\co{V}(t;\sigma, \omega)=\co{U}\bigl(t,(t+\sigma)\omega\bigr)
\end{equation}
for $(\sigma, \omega) \in \R \times \Sph^2$ and 
$t>\max\{0,-\sigma\}$. Note that we have
$\co{V}(t;\sigma, \omega)
 =\sqrt{c_0}V_1(t;\sigma, \omega)+ic_0 V_2(t;\sigma, \omega)$,
where $V=(V_1, V_2)^{\rm T}$
is given by \eqref{profile_v}.
Let $R$ be the constant appearing in \eqref{supp_0}. 
It follows from \eqref{ode_W} that $\co{V}(t;\sigma,\omega)$ satisfies 
\begin{equation}
 i\pa_t \co{V}(t;\sigma,\omega)
 = 
 \frac{c(\omega) \realpart\bigl( \co{V}(t;\sigma,\omega) \bigr)}{2t}
 \co{V}(t;\sigma,\omega)
 + 
 i\co{H}\bigl(t, (t+\sigma)\omega\bigr)
\label{eq_i_V}
\end{equation}
for $t>t_{0,\sigma}$ and $\sigma\le R$. 
Note that all the estimates obtained in the proof of 
Proposition~\ref{lem_apriori} 
are valid with $T=\infty$, because we have already shown that \eqref{AE00} 
is valid. On the other hand, for $\sigma>R$, we have 
$$
 \lim_{t\to\infty} \co{V}(t;\sigma,\omega) = \lim_{t\to\infty}0 = 0
$$
because of the finite propagation property \eqref{supp_t}.

As an application of Lemma~\ref{lem6_001}, we have the following. 

\begin{cor} \label{cor6_03}
Let $\eps$ be sufficiently small. 
Suppose that $c(\omega)\not\equiv 0$ on $\Sph^2$.
Then $\lim_{t\to\infty} \co{V}(t;\sigma,\omega)$ exists 
for each $(\sigma, \omega)\in \R\times \Sph^2$. If we put 
$$
 \co{V}^+(\sigma,\omega)
 :=
 \lim_{t \to \infty} \co{V}(t;\sigma,\omega)
$$ 
for each $(\sigma, \omega)\in \R\times \Sph^2$, then we have
\begin{align}
 \realpart \co{V}^+(\sigma,\omega)=0
 \label{dissipation}
\end{align}
for almost all $(\sigma,\omega) \in \R\times \Sph^2$.
Moreover we have $\co{V}^+ \in L^2(\R\times\Sph^2)$ and
\begin{equation}
 \lim_{t\to\infty} \int_{\R\times \Sph^2} 
  |\chi_t(\sigma)\co{V}(t;\sigma,\omega)
   -
   \co{V}^+(\sigma, \omega)|^2 
 d\sigma dS_\omega
 =0,
\label{conv_V_L2}
\end{equation}
where $\chi_t(\sigma)=1$ for $\sigma>-t$, 
and $\chi_t(\sigma)=0$ for $\sigma \le -t$.
\end{cor}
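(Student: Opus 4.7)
The plan is to apply Lemma~\ref{lem6_001} to the profile equation \eqref{eq_i_V} for each fixed $(\sigma,\omega)\in(-\infty,R]\times\Sph^2$, taking $z(t)=\co{V}(t;\sigma,\omega)$, $\Phi(z)=c(\omega)\realpart(z)/2$ and $J(t)=i\co{H}\bigl(t,(t+\sigma)\omega\bigr)$. The crucial structural point is that $\Phi$ is real-valued thanks to the definition \eqref{Index} of $c(\omega)$, so the hypothesis of Lemma~\ref{lem6_001} is met with Lipschitz constant $C_0=\|c\|_{L^\infty(\Sph^2)}/2$. To verify \eqref{assump02}, I would use \eqref{est_H}: at $x=(t+\sigma)\omega$ one has $|t-|x||=|\sigma|$, whence $|\co{H}(t,(t+\sigma)\omega)|\le C\eps\, t^{2\mu-2}\jb{\sigma}^{-\mu}$, giving $E_0=C\eps\jb{\sigma}^{-\mu}$ and $\lambda=1-2\mu>0$. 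Combined with $t_{0,\sigma}\ge C^{-1}\jb{\sigma}$ and $|\co{V}(t_{0,\sigma};\sigma,\omega)|\le C\eps\jb{\sigma}^{\mu-1}$ (from \eqref{est_V}), the smallness hypothesis of the lemma holds uniformly in $(\sigma,\omega)$ for $\eps$ small; all these estimates remain valid with $T=\infty$ by \eqref{AE00}. For $\sigma>R$ the finite propagation property forces $\co{V}\equiv 0$, so $\co{V}^+\equiv 0$ on that range.

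Lemma~\ref{lem6_001} then furnishes a $C^1$ profile $p(s;\sigma,\omega)$ satisfying $i\frac{dp}{ds}=\Phi(p)p$ together with $|\co{V}(t;\sigma,\omega)-p(\log t;\sigma,\omega)|=O(t^{-\lambda})$ as $t\to\infty$, so the existence of $\co{V}^+(\sigma,\omega)=\lim_{t\to\infty}\co{V}(t;\sigma,\omega)$ will follow once $p(s;\sigma,\omega)$ is shown to have a limit as $s\to\infty$. Writing $p=x+iy$ one obtains the real system $x'=\frac{c(\omega)}{2}xy$, $y'=-\frac{c(\omega)}{2}x^2$, from which $\frac{d}{ds}(x^2+y^2)=0$, hence $|p(s)|\equiv A_0:=|p(\log t_{0,\sigma})|$. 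In polar form $p=A_0e^{i\theta}$ the phase obeys the autonomous scalar ODE $\frac{d\theta}{ds}=-\frac{c(\omega)A_0}{2}\cos\theta$, which is a monotone flow between its equilibria $\theta\in\frac{\pi}{2}+\pi\Z$. Thus whenever $c(\omega)A_0\ne 0$, the (bounded) orbit $\theta(s)$ converges monotonically to a zero of $\cos$, and the exceptional stationary orbits $\theta\equiv\pm\pi/2$ already sit at such a zero; in either case $\realpart p(s)=A_0\cos\theta(s)\to 0$. When $c(\omega)=0$ the profile equation is simply $p'=0$, so the limit still exists (but the conclusion $\realpart\co{V}^+=0$ may fail). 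Since $c$ is a non-trivial quadratic polynomial on $\Sph^2$, the exceptional set $\{c(\omega)=0\}$ has surface measure zero, which yields \eqref{dissipation}.

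For \eqref{conv_V_L2}, I would invoke \eqref{est_V} once more: uniformly in $t\ge t_{0,\sigma}$ and in $\omega\in\Sph^2$ one has $|\co{V}(t;\sigma,\omega)|\le C\eps\jb{\sigma}^{\mu-1}$, and the same bound passes to the pointwise limit $\co{V}^+$. Since $\mu<1/2$ the function $\jb{\sigma}^{\mu-1}$ lies in $L^2(\R)$, so $\co{V}^+\in L^2(\R\times\Sph^2)$. The integrand in \eqref{conv_V_L2} tends to zero pointwise (the cut-off $\chi_t$ only excises the shrinking tail $\sigma\le-t$), and is dominated by $(2C\eps)^2\jb{\sigma}^{2\mu-2}\in L^1(\R\times\Sph^2)$; the dominated convergence theorem then delivers \eqref{conv_V_L2}.

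The main obstacle is concentrated in the middle step: one must exploit the reality of $\Phi$—which is precisely the translation of the condition (H) to this complex reformulation—to reduce the dynamics to a conserved modulus and a one-dimensional autonomous phase equation, and then argue that \emph{every} orbit of $\theta'=-\alpha\cos\theta$ drifts onto the zero set of $\cos\theta$ (rather than merely generic ones). Everything else is essentially a bookkeeping of the a priori bounds \eqref{est_V} and \eqref{est_H} established in Section~\ref{PT1} together with a standard dominated-convergence argument.
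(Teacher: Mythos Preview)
Your proof is correct and follows the paper's route almost step for step: apply Lemma~\ref{lem6_001} to \eqref{eq_i_V}, analyze the resulting profile ODE, discard the measure-zero set $\{c(\omega)=0\}$, and close with dominated convergence. The only substantive variation is in the analysis of $i\,p'=\Phi(p)\,p$: you pass to the polar form $p=A_0e^{i\theta}$ and use the monotone scalar flow $\theta'=-\tfrac12 c(\omega)A_0\cos\theta$ to push $\theta$ onto $\tfrac{\pi}{2}+\pi\Z$, whereas the paper sets $X=\tfrac12\realpart p$, $Y=\tfrac12\imagpart p$, derives the Riccati equation $Y'=c(\omega)(Y^2-\rho^2)$, and integrates it explicitly. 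Your argument is more qualitative and arguably cleaner; the paper's yields the limiting value in closed form (a feature exploited again in Section~\ref{ConclRem}, though not needed here).

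One small imprecision worth flagging: \eqref{est_V} is established only for $t\ge t_{0,\sigma}$, and this does not cover the strip $-t<\sigma\le -t/2$ permitted by the cutoff $\chi_t$, so it does not by itself furnish a $t$-independent dominating function. The paper sidesteps this by deriving the bound $|\co{V}(t;\sigma,\omega)|\le C\eps\jb{\sigma}^{\mu-1}$ for \emph{all} $t>\max\{0,-\sigma\}$ directly from \eqref{AE} and \eqref{rough_bound_u} (see \eqref{Est_V_0}); you should invoke those global pointwise bounds rather than \eqref{est_V} before applying dominated convergence.
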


\begin{proof} 
First we show the convergence of $\co{V}(t;\sigma,\omega)$ as 
$t\to\infty$, and \eqref{dissipation}. 
We have only to consider the case $\sigma\le R$, because 
the opposite case is trivial. 
By \eqref{est_H} and \eqref{est_V_ini}, we can apply Lemma~\ref{lem6_001} to 
\eqref{eq_i_V} with $z(t)=\co{V}(t;\sigma,\omega)$, 
$\Phi(z)=c(\omega)(\realpart z)/2$,
$J(t)=i\co{H}(t,(t+\sigma)\omega)$, and $t_0=t_{0,\sigma}$,
provided that $\eps$ is small enough, because we have
$$
 C_0(E_0t_0^{-\lambda}+|z(t_0)|\lambda)\le C_1 \eps < \lambda^2
$$
for $0<\eps<\lambda^2/C_1$, where we have taken $C_0=\max_{\omega\in \Sph^2} c(\omega)/2$, 
$E_0=C\eps\jb{\sigma}^{-\mu}$, and $\lambda=1-2\mu$, while $C_1$ 
is an appropriate positive constant independent of $\sigma$ and $\omega$.
It follows from Lemma \ref{lem6_001} that 
for any $(\sigma,\omega)\in (-\infty, R]\times \Sph^2$, there is 
$p(s)$ satisfying
$$
 i\frac{dp}{ds}(s)=
 \frac{c(\omega) \realpart \bigl( p(s) \bigr)}{2}p(s)
$$
and
$$
 \lim_{t \to \infty}|\co{V}(t;\sigma,\omega)-p(\log t)| =0.
$$
So it is enough to show that $p(s)$ converges as $s\to \infty$, and that 
$\realpart p(s)\to 0$ as $s \to \infty$ for almost all $(\sigma, \omega)\in (-\infty, R]\times \Sph^2$. 
If $c(\omega)=0$, then $p(s)$ is independent of $s$ and the convergence of $p(s)$ as $s\to\infty$ is trivial. 
Since $c(\omega)$ is a polynomial of degree $2$ in $\omega$, the set
of $(\sigma, \omega)\in \R\times \Sph^2$ with $c(\omega)=0$ has measure zero
unless $c(\omega)$ vanishes identically on $\Sph^2$.
Hence we may assume $c(\omega)\ne 0$ from now on, 
and we are going to show 
that $p(s)$ converges to a pure imaginary number as $s\to\infty$.
For this purpose, we set $X(s)=\realpart p(s)/2$, $Y(s)=\imagpart p(s)/2$ 
to rewrite the above equation as 
\begin{align}
 \frac{dX}{ds}(s)= {c}(\omega) X(s)Y(s), \quad \frac{dY}{ds}(s)= -{c}(\omega)X(s)^2.
\label{ode_XY}
\end{align}
We observe that 
$$
 \frac{d}{ds}\Bigl(X(s)^2+Y(s)^2 \Bigr)=0,
$$
which implies that $X(s)^2+Y(s)^2$ is independent of $s$. 
We denote this conserved quantity by $\rho^2$, where $\rho \ge0$. 
The case $\rho=0$ is trivial, because we have $X(s)=Y(s)\equiv 0$. 
Hence we assume $\rho>0$ from now on. 
From the second equation of \eqref{ode_XY} we have 
$$
 \frac{dY}{ds}(s)= c(\omega)\left(Y(s)^2-\rho^2\right).
$$
This can be explicitly integrated as 
$$
 Y(s)=
 \rho\frac{(\rho+\eta)e^{-c(\omega)\rho s}-(\rho -\eta)e^{c(\omega) \rho s}}
 {(\rho+\eta)e^{-c(\omega) \rho s}+(\rho-\eta)e^{c(\omega) \rho s}}
$$
with some real constant $\eta$ satisfying $|\eta|\le \rho$. 
We can also see that 
$$
 X(s)
 =
 \frac{2\rho \xi}
      {(\rho+\eta)e^{-c(\omega)\rho s}+(\rho-\eta)e^{c(\omega)\rho s}}
$$
with some real constant $\xi$ satisfying $\xi^2+\eta^2=\rho^2$. 
If $\xi=0$, then we have $X(s)\equiv 0$, and $Y(s)\equiv \pm \rho$.
If $\xi\ne 0$, then
$\eta^2<\rho^2$. Especially we have $\rho\pm\eta\ne 0$, and we get
\begin{align*}
  \lim_{s \to \infty}X(s)
  = &
  \lim_{s \to \infty}
  \frac{2\rho \xi e^{-|c(\omega)|\rho s}}
       {(\rho \pm \eta)e^{-2|c(\omega)|\rho s}+(\rho\mp \eta)}
  = 
  0, \\
  \lim_{s \to \infty}Y(s)
  = & 
  \rho \lim_{s \to \infty}
  \frac{ \pm\left((\rho \pm \eta)e^{-2|c(\omega)|\rho s}-(\rho\mp \eta)\right) }       {(\rho\pm \eta)e^{-2|c(\omega)|\rho s}+(\rho\mp \eta)}
  = \mp\rho.\,
\end{align*}
where the double sign depends on the signature of $c(\omega)$.
Now the existence of $\lim_{t\to\infty}\co{V}(t;\sigma,\omega)$ and 
\eqref{dissipation} have been established.

It follows from \eqref{AE} and \eqref{rough_bound_u} that
$$
 |\co{U}(t,r\omega)|\le C|U(t,r\omega)|
 =
 C\left|D_-\bigl(ru(t,r\omega)\bigr)\right|
 \le 
 C\eps \jb{t-r}^{-1+\mu}
$$
for any $(t, r, \omega)\in [0,\infty)\times (0,\infty)\times \Sph^2$.
Since $\co{V}(t;\sigma,\omega)
=\co{U}\bigl(t, (t+\sigma)\omega\bigr)$, 
we obtain
\begin{equation}
 |\co{V}(t;\sigma,\omega)|\le C\eps \jb{\sigma}^{-1+\mu}
\label{Est_V_0}
\end{equation}
for $(\sigma,\omega) \in \R\times \Sph^2$ and $t>\max\{0, -\sigma\}$. 
Hence, by taking the limit 
of this inequality as $t \to \infty$, we have 
$$
 |\co{V}^+(\sigma,\omega)|\le C\eps \jb{\sigma}^{-1+\mu},
 \quad (\sigma,\omega)\in \R\times \Sph^2,
$$
which shows $\co{V}^+\in L^2(\R\times\Sph^2)$ since $\mu<1/2$. 
Furthermore we have
$$
 |\chi_t(\sigma) \co{V}(t;\sigma,\omega)
  -
  \co{V}^+(\sigma, \omega)|^2
  \le 
  C\eps^2 \jb{\sigma}^{-2+2\mu}
  \in L^1(\R\times \Sph^2)
$$
for $t\ge 0$. 
Now, since 
$\lim_{t\to\infty}|\chi_t(\sigma)\co{V}(t;\sigma,\omega)
-\co{V}^+(\sigma, \omega)|^2=0$ 
for each $(\sigma,\omega)\in \R\times \Sph^2$, 
Lebesgue's convergence theorem implies \eqref{conv_V_L2}. 
This completes the proof.
\end{proof}

\section{Proof of Theorem~\ref{thm_asymp}} 
\label{PT2}
In the following, we write 
$$
\hat{\omega}(x)
=
\bigl(\hat{\omega}_a(x)\bigr)_{a=0,1,2,3}=(-1, x_1/|x| , x_2/|x|, x_3/|x|)
$$ 
for $x\in \R^3\setminus\{0\}$.
For the proof of Theorem~\ref{thm_asymp}, we will use the following lemma:
\begin{lem}\label{KatLem}
Let $\phi\in C\bigl([0,\infty); \dot{H}^1(\R^3)\bigr)\cap 
C^1\bigl([0,\infty); L^2(\R^3)\bigr)$. 
The following assertions {\rm (i)} and {\rm (ii)} are equivalent: \\
{\rm (i)} 
There exists 
$(\phi_0^+,\phi_1^+)\in \calH(\R^3)=\dot{H}^1(\R^3)\times L^2(\R^3)$ 
such that 
$$
\lim_{t\to\infty} \|\phi(t)-\phi^+(t)\|_E=0,
$$
where $\phi^+\in C\bigl([0,\infty); \dot{H}^1(\R^3)\bigr)\cap 
C^1\bigl([0,\infty); L^2(\R^3)\bigr)$ 
is the unique solution to 
$\dal\phi^+=0$ with $(\phi^+, \pa_t \phi^+)(0)=(\phi_0^+, \phi_1^+)$.\\
{\rm (ii)} 
There is a function 
$P=P(\sigma, \omega)\in L^2(\R\times \Sph^2)$ such that
$$
\lim_{t\to\infty} 
\|\pa \phi(t,\cdot)-\hat{\omega}(\cdot)P^\sharp(t,\cdot)\|_{L^2(\R^3)}=0,
$$
where $P^\sharp$ is given by
$$
P^\sharp (t,x)=\frac{1}{|x|}P(|x|-t, |x|^{-1}x),\quad x\ne 0.
$$
\end{lem}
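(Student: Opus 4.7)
The plan is to reduce the equivalence to a single statement about free solutions: for every $\phi^+ \in C([0,\infty);\dot{H}^1(\R^3))\cap C^1([0,\infty); L^2(\R^3))$ satisfying $\dal\phi^+=0$ with data $(\phi_0^+,\phi_1^+)\in \calH(\R^3)$, there is a unique $P = \mathcal{R}[(\phi_0^+,\phi_1^+)]\in L^2(\R\times\Sph^2)$ such that
\[
\lim_{t\to\infty}\|\pa\phi^+(t,\cdot) - \hat{\omega}(\cdot) P^\sharp(t,\cdot)\|_{L^2(\R^3)}=0,
\]
and the assignment $\mathcal{R}$ extends to an isometric isomorphism from $\calH(\R^3)$ onto $L^2(\R\times\Sph^2)$ (up to a fixed normalization constant). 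This is the classical Friedlander radiation field theorem; I would invoke it as a black box rather than redo it from scratch.

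Granted $\mathcal{R}$, both implications are short triangle-inequality arguments. For (i)$\Rightarrow$(ii), set $P:=\mathcal{R}[(\phi_0^+,\phi_1^+)]$ using the asymptote supplied by (i). Since $\|\pa(\phi-\phi^+)(t)\|_{L^2}^2 = 2\|\phi(t)-\phi^+(t)\|_E^2\to 0$, combining with the defining property of $\mathcal{R}$ for $\phi^+$ yields
\[
\|\pa\phi(t)-\hat{\omega}P^\sharp(t)\|_{L^2}\le \|\pa(\phi-\phi^+)(t)\|_{L^2}+\|\pa\phi^+(t)-\hat{\omega}P^\sharp(t)\|_{L^2}\to 0.
\]
For (ii)$\Rightarrow$(i), use the surjectivity of $\mathcal{R}$ to select $(\phi_0^+,\phi_1^+)\in\calH$ with $\mathcal{R}[(\phi_0^+,\phi_1^+)]=P$, let $\phi^+$ be the corresponding free solution, and invert the triangle inequality to conclude $\|\pa(\phi-\phi^+)(t)\|_{L^2}\to 0$, whence $\|\phi(t)-\phi^+(t)\|_E\to 0$.

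The substantive obstacle is therefore the radiation-field theorem, and within it surjectivity of $\mathcal{R}$ is the delicate point (the asymptotic identification of $P$ for smooth compactly supported data being comparatively standard). My preferred route is via the Radon transform: for $(\phi_0^+,\phi_1^+)\in C_0^\infty\times C_0^\infty$, Kirchhoff's formula can be reorganized as $r\phi^+(t,r\omega)=F(r-t,\omega)+o(1)$ with a travelling-wave profile $F$ explicitly built from the Radon transform of the data, so that $P=-\pa_\sigma F$ satisfies the required asymptotics. Plancherel for the Radon transform on $\R^3$ then furnishes the isometry $\|(\phi_0^+,\phi_1^+)\|_\calH=\|P\|_{L^2(\R\times\Sph^2)}$ up to a fixed constant, which allows $\mathcal{R}$ to be extended by density to all of $\calH(\R^3)$; surjectivity follows because its image already contains the dense subset of profiles $-\pa_\sigma F$ with $F\in C_0^\infty(\R\times\Sph^2)$. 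Once $\mathcal{R}$ is in hand, the equivalence stated in the lemma is merely bookkeeping.
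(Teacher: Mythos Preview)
Your approach is sound and essentially coincides with what the paper indicates: the paper does not give its own proof of this lemma but refers to \cite{ka2} (and \cite{ka}) and remarks that the relation between $(\phi_0^+,\phi_1^+)$ and $P$ is precisely $P=\mathcal{T}[\phi_0^+,\phi_1^+]$, the Lax--Phillips translation representation, which is an isometric isomorphism $\calH(\R^3)\to L^2(\R\times\Sph^2)$ and is given on smooth compactly supported data by the Radon-transform formula you describe. Your reduction to a radiation-field statement for free solutions plus two triangle-inequality arguments is exactly the right skeleton, and your sketch of the isometry via Radon/Plancherel and density is the standard route taken in those references.
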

See \cite{ka2} for the proof (see also \cite{ka}, where the above result was 
implicitly proved). 
We note that $(\phi_0^+, \phi_1^+)$ and $P$ above 
are related by $P={\mathcal T}[\phi_0^+,\phi_1^+]$,
where ${\mathcal T}[\phi_0^+,\phi_1^+]$ is the 
so-called translation representation of $(\phi_0^+,\phi_1^+)$ 
introduced by Lax-Phillips~\cite[Chapter IV]{lp}. 
More precisely, ${\mathcal T}$ is an isometric isomorphism from 
$\calH(\R^3)$ to $L^2(\R\times \Sph^2)$ 
which can be represented as
$$
{\mathcal T}[\phi_0,\phi_1](\sigma,\omega)=\frac{1}{4\pi}\left(-\pa_\sigma{\mathcal R}[\phi_0](\sigma, \omega)+{\mathcal R}[\phi_1](\sigma, \omega)\right), \quad
(\sigma, \omega)\in \R\times \Sph^2,
$$
for $(\phi_0,\phi_1)\in C^\infty_0(\R^3)\times C^\infty_0(\R^3)$, 
where ${\mathcal R}[\psi]$ is the Radon transform of $\psi$, given by
$$
{\mathcal R}[\psi](\sigma, \omega)=\int_{y\cdot \omega=\sigma} \psi(y) dS_y
$$
with the surface element $dS_y$ on the plane $\{y\in \R^3;y\cdot\omega=\sigma\}$.

\begin{proof}[Proof of Theorem~$\ref{thm_asymp}$]
Let $u=(u_1,u_2)^{\rm T}$ be the global solution to \eqref{eq}--\eqref{data}
with \eqref{TypicalExample} for small $\eps$, and $U=(U_1, U_2)^{\rm T}$ and 
$V=(V_1, V_2)^{\rm T}$ be
given by \eqref{U} and \eqref{profile_v}, respectively.
Suppose that $c_0>0$ and $c(\omega)\not\equiv 0$ on $\Sph^2$.
Recall that all the estimates in the proof of Proposition~\ref{lem_apriori} 
are valid in our present setting. 

As in the previous section, we define 
$\co{U}=\co{U}(t,x)$ by \eqref{def_U_0},
and $\co{V}=\co{V}(t;\sigma, \omega)$ by \eqref{def_V_0}.
We write 
$\co{V}^+(\sigma, \omega)
 =
 \lim_{t\to \infty} \co{V}(t;\sigma, \omega)$ 
whose existence is guaranteed by Corollary~\ref{cor6_03}.
If we put
\begin{align*}
V_1^+(\sigma, \omega)= c_0^{-1/2} \realpart \co{V}^+(\sigma, \omega)
\text{ and }
V_2^+(\sigma, \omega)= & c_0^{-1} \imagpart \co{V}^+(\sigma, \omega),
\end{align*}
then Corollary~\ref{cor6_03} implies that $V_1^+(\sigma, \omega)=0$ almost 
everywhere and $V_2^+\in L^2(\R\times \Sph^{2})$. Hence, 
if we can prove
\begin{equation}
\label{Goal01}
\lim_{t\to \infty} 
\sum_{j=1}^2 
 \|
   \pa u_j(t,\cdot)-\widehat{\omega}(\cdot)V_j^{+, \sharp}(t,\cdot)
 \|_{L^2(\R^3)}
 =
 0,
\end{equation}
then we obtain \eqref{asymp_u1} immediately, and also \eqref{asymp_u2}
with the help of Lemma~\ref{KatLem}, 
where
$$
V_j^{+,\sharp}(t,x):=\frac{1}{|x|} V_j^+(|x|-t, |x|^{-1}x),\quad x\ne 0
$$
for $j=1,2$.
We define
\begin{align*}
J_1(t)
= &
\left(\sum_{j=1}^2 \int_{\Sph^2}\left(\int_0^\infty 
  |r \pa u_j(t, r\omega)-\widehat{\omega}(r\omega)V_j(t;r-t,\omega)|^2 
 dr \right)dS_\omega\right)^{1/2},\\
J_2(t)
= &
\left(\sum_{j=1}^2 \int_{\Sph^2}\left(\int_0^\infty 
  |\widehat{\omega}(r\omega) V_j(t;r-t,\omega)
   -
   r\widehat{\omega}(r\omega)V_j^{+, \sharp}(t, r\omega)|^2 
 dr\right)dS_\omega\right)^{1/2}.
\end{align*}
It follows from
\eqref{est_pa_t}, \eqref{est_pa_j}, and  \eqref{rough_bound_u} that
\begin{align*}
 J_1(t)^2 
 \le & 
  C \int_{\Sph^2}\left(\int_0^\infty |u(t,r\omega)|_1^2 dr\right)dS_\omega 
 \le 
  C\eps^2\int_0^\infty \jb{t+r}^{2\mu-2}dr\\
 \le & 
  C \eps^2\jb{t}^{2\mu-1}\to 0
\end{align*}
as $t\to\infty$.
Therefore \eqref{Goal01} follows from 
\begin{equation}
\label{Goal02}
\lim_{t\to \infty} J_2(t)=0,
\end{equation}
because we have $\sum_{j=1}^2\|\pa u_j(t)-\widehat{\omega}V_j^{+,\sharp}(t)\|_{L^2}\le J_1(t)+J_2(t)$.
In order to prove \eqref{Goal02},
we introduce
$$
 \co{V}^{+, \sharp}(t,x)
 =
 \frac{1}{|x|}\co{V}^+(|x|-t, |x|^{-1}x),\quad x\ne 0.
$$
Let
$$
J_3(t)
= 
\left(\int_{\Sph^2}\left(\int_0^\infty 
  |\widehat{\omega}(r\omega) \co{V}(t;r-t,\omega)
   -
   r\widehat{\omega}(r\omega) \co{V}^{+, \sharp}(t, r\omega)|^2 
 dr\right)dS_\omega\right)^{1/2}.
$$
By \eqref{conv_V_L2} we get
\begin{align*}
 J_3(t)^2
 =& 
 2 \int_{\Sph^2}\left( \int_0^\infty 
    |\co{V}(t;r-t,\omega)-\co{V}^{+}(r-t, \omega)|^2 
   dr\right)dS_\omega
  \nonumber\\
 =& 2 \int_{\Sph^2}\left( \int_{-t}^\infty 
    |\chi_t(\sigma) \co{V}(t;\sigma,\omega)
     -
     \co{V}^{+}(\sigma, \omega)|^2 
   d\sigma\right)dS_\omega
  \nonumber\\
 \le & 
  2 \int_{\Sph^2}\left(\int_\R 
    |\chi_t(\sigma) \co{V}(t;\sigma,\omega)
     -
     \co{V}^{+}(\sigma, \omega)|^2 
  d\sigma\right)dS_\omega
\to 0
\end{align*}
as $t\to\infty$, because $\chi_t(\sigma)=1$ for $\sigma>-t$.
Since $J_2(t)\le CJ_3(t)$, we obtain \eqref{Goal02} immediately.

It remains to prove \eqref{AsympProfile}.
We set 
$$
\|u(t)\|_{\widetilde{E}}^2:=c_0^{-1} \|u_1(t)\|_E^2+\|u_2(t)\|_E^2
$$
for $u=(u_1,u_2)^{\rm T}$. 
By the standard argument of the energy, we have 
\begin{align*}
\frac{d}{dt}\left(\|u(t)\|_{\widetilde{E}}^2\right)
=& \int_{\R^3} \frac{F_1\bigl(\pa u(t,x)\bigr)}{c_0} \pa_t u_1(t,x) dx
{}+\int_{\R^3} F_2\bigl(\pa u(t,x)\bigr) \pa_t u_2(t,x) dx.
\end{align*}
Let $R$ be the constant appearing in \eqref{supp_0}, and we put
$\Lambda_{\infty,R}=\{(t,x)\in [0,\infty)\times \R^3; 1\le t/2\le |x|\le t+R\}$.
We put $\chi(t,x)=1$ if $(t,x)\in \Lambda_{\infty,R}$, and $\chi(t,x)=0$ 
otherwise. 
Since $\bigl(1-\chi(t,x)\bigr)|\pa u(t,x)|\le C\eps \jb{t+r}^{\mu-2}$ 
by \eqref{AE}, it follows from \eqref{Ene03} that
\begin{align*}
\sum_{j=1}^2\int_{\R^3}\bigl(1-\chi(t,x)\bigr)\left|F_j\bigl(\pa u(t,x)\bigr)(\pa_t u_j)(t,x)\right|dx\le & C\eps (1+t)^{\mu-2}\|\pa u(t)\|_{L^2}^2\\
\le & C\eps^2(1+t)^{(3\mu/2)-2}\|u(t)\|_{\widetilde{E}}
\end{align*}
for sufficiently small $\eps$.
For $(t,x)\in \Lambda_{\infty, R}$, we obtain from
\eqref{est_pa_t} and \eqref{est_pa_j} that
$$
|\pa_a u(t,x)-\omega_a D_- u(t,x)|\le C\jb{t+r}^{-1} |u(t,x)|_1,
$$
which leads to
\begin{align*}
|(\pa_a u_k)(\pa_b u_l)(\pa_t u_j)+\omega_a\omega_b(D_-u_k)(D_-u_l)(D_-u_j)|
\le & C \jb{t+r}^{-1}|u|_1|\pa u|^2\\
\le & C\eps(1+t)^{\mu-2}|\pa u|^2
\end{align*}
with the help of \eqref{rough_bound_u}.
As an immediate consequence, we obtain
\begin{align*}
\frac{F_1(\pa u)}{c_0}(\pa_tu_1)+F_2(\pa u)(\pa_t u_2)
=& -\frac{F_1^{\rm red}(\omega, D_-u)}{c_0}(D_-u_1)\\
& -F_2^{\rm red}(\omega, D_-u)(D_-u_2)
+O(\eps(1+t)^{\mu-2}|\pa u|^2)\\
=& O(\eps(1+t)^{\mu-2}|\pa u|^2)
\end{align*}
for $(t,x)\in \Lambda_{T,R}$, because of the structure \eqref{TypicalExample}.
Therefore we get
\begin{align*}
\int_{\R^3} \chi(t,x)\left|\frac{F_1(\pa u)}{c_0}(\pa_tu_1)+F_2(\pa u)(\pa_t u_2)\right|dx
\le & C \eps(1+t)^{\mu-2}\|\pa u(t)\|_{L^2}^2\\
\le & C\eps^2 (1+t)^{(3\mu/2)-2}\|u(t)\|_{\widetilde{E}},
\end{align*}
provided that $\eps$ is small enough.
To sum up, we obtain
$$
\left|\frac{d}{dt}\left(\|u(t)\|_{\widetilde{E}}^2\right)\right|
\le C\eps^2 (1+t)^{(3\mu/2)-2}\|u(t)\|_{\widetilde{E}},
$$
which yields
\begin{align*}
\left| \|u(t)\|_{\widetilde{E}}-\|u(0)\|_{\widetilde{E}}\right|
\le & C\eps^2
\int_0^\infty (1+\tau)^{(3\mu/2)-2}d\tau \le C\eps^2.
\end{align*}
Since we have $\|u_2^+(t)\|_E=\|u_2^+(0)\|_E$,
it follows that
\begin{align*}
\left|\|u(0)\|_{\widetilde{E}}-\|u_2^+(0)\|_E\right|
\le & \left|\|u(0)\|_{\widetilde{E}}-\|u(t)\|_{\widetilde{E}}\right|
+\left|\|u(t)\|_{\widetilde{E}}-\|u_2^+(t)\|_E\right|\\
\le & C\left(\eps^2+\|u_1(t)\|_E+\|u_2(t)-u_2^+(t)\|_E\right).
\end{align*}
By \eqref{asymp_u1} and \eqref{asymp_u2}, taking the limit as $t\to\infty$ in
the inequality above, we obtain
$$
\left|\|u(0)\|_{\widetilde{E}}-\|u_2^+(0)\|_E\right|\le
C \eps^2,
$$ 
which immediately yields \eqref{AsympProfile}. This completes the proof.
\end{proof}
\section{Asymptotic behavior for general two-component systems 
under the condition (H)}
\label{ConclRem}

In this section, we discuss the asymptotic behavior for general 
two-component systems which are not necessarily of the form 
\eqref{TypicalExample}. 
If the condition (H) is satisfied with some ${\mathcal A}(\omega)$,
then the condition (H) with ${\mathcal A}(\omega)$ replaced by $h(\omega){\mathcal A}(\omega)$ remains valid for an arbitrary continuous function $h$ on $\Sph^2$ with positive values. Therefore, without loss of generality, we may assume that ${\mathcal A}(\omega)$ 
has $1$ and $c_0(\omega)$ as its eigenvalues, 
where $c_0$ is a positive and continuous function on $\Sph^2$. Then
we can take an orthogonal matrix ${\mathcal P}(\omega)$ 
such that
$$
{\mathcal A}(\omega)={\mathcal P}(\omega)^{\rm T} \left(\begin{matrix} 1 & 0 \\
0 & c_0(\omega) \end{matrix} \right){\mathcal P}(\omega).
$$
Since the condition (H) yields 
$$
\left({\mathcal P(\omega)}Y\right)^{\rm T} \left(\begin{matrix} 1 & 0 \\
0 & c_0(\omega) \end{matrix} \right) {\mathcal P}(\omega) F^{\rm red}(\omega, Y)=0,
$$
we see that 
${\mathcal P}(\omega) 
 F^{\rm red}\left(\omega, {\mathcal P}(\omega)^{\rm T}\widetilde{Y}\right)$ 
is perpendicular to $( \widetilde{Y}_1, c_0(\omega)\widetilde{Y}_2)^{\rm T}$
for all $\widetilde{Y}=\bigl(\widetilde{Y}_1, \widetilde{Y}_2\bigr)^{\rm T}\in \R^2$, 
by substituting $Y={\mathcal P}(\omega)^{\rm T} \widetilde{Y}$. 
Accordingly, we deduce that 
\begin{equation}
\label{diagonalize}
 {\mathcal P}(\omega) 
 F^{\rm red}\left(\omega, {\mathcal P}(\omega)^{\rm T}\widetilde{Y}\right)
 =
 \left(\widetilde{c}_1(\omega)\widetilde{Y}_1+\widetilde{c}_2(\omega)\widetilde{Y}_2\right)
 \left(\begin{matrix}
   -c_0(\omega)\widetilde{Y}_2\\
   \widetilde{Y}_1
  \end{matrix}\right)
\end{equation}
with some $\widetilde{c}_1(\omega)$ and $\widetilde{c}_2(\omega)$. 
Here $\widetilde{c}_1$ and $\widetilde{c}_2$ are bounded functions on 
$\Sph^2$. In fact,
substituting $\widetilde{Y}=(1, 0)^{\rm T}$ in \eqref{diagonalize}, 
we find that
$$
|\widetilde{c}_1(\omega)|=\left|{\mathcal P}(\omega)F^{\rm red}\left(\omega, {\mathcal P}(\omega)^{\rm T}\widetilde{Y}\right)\right|\le \max_{\eta\in \Sph^2, |Y|=1}|F^{\rm red}(\eta, Y)|,
\quad \omega\in \Sph^2,
$$
and a similar estimate for $\widetilde{c}_2$ can be obtained by 
choosing $\widetilde{Y}=(0,1)^{\rm T}$. 
It is easy to see that the null condition is satisfied if and only if 
$\widetilde{c}_1(\omega)^2+\widetilde{c}_2(\omega)^2=0$ for all $\omega\in \Sph^2$. 
Moreover, the set of $\omega\in \Sph^2$ with 
$\widetilde{c}_1(\omega)^2+\widetilde{c}_2(\omega)^2=0$ is of surface measure zero when
the null condition is violated. Indeed,
for $\omega$ satisfying $\widetilde{c}_1(\omega)^2+\widetilde{c}_2(\omega)^2=0$, we find 
$F^{\rm red}(\omega, {\mathcal P}(\omega)^{\rm T}\widetilde{Y})=0$ 
for all $\widetilde{Y}\in \R^2$, and hence
$F^{\rm red}(\omega,Y)=0$ for all $Y\in \R^2$; if the null condition is 
violated, then the set of such $\omega$ has surface measure zero, 
since the coefficients of $Y_kY_l$ with $k,l\in\{1,2\}$ in $F^{\rm red}(\omega, Y)$
are polynomials of degree $2$ in $\omega$.

Suppose that the condition (H) is satisfied, but the null condition is violated.
Let $u=(u_1, u_2)^{\rm T}$ be the global solution to \eqref{eq}--\eqref{data}. 
Let $U=(U_1, U_2)^{\rm T}$ and $V=(V_1, V_2)^{\rm T}$ be given by \eqref{U} 
and \eqref{profile_v}, respectively. We put 
$$
\widetilde{V}(t;\sigma, \omega)
=
\bigl(
 \widetilde{V}_1(t;\sigma, \omega), \widetilde{V}_2(t;\sigma, \omega)
\bigr)^{\rm T}
=
{\mathcal P}(\omega)V(t;\sigma, \omega),
$$
and
\begin{align*}
\co{\widetilde{V}}(t;\sigma, \omega)
=& 
 \sqrt{c_0(\omega)}\widetilde{V}_1(t;\sigma,\omega)
 +
 ic_0(\omega)\widetilde{V}_2(t;\sigma, \omega)
=
 {\mathcal C}(\omega)^{\rm T}\widetilde{V}(t;\sigma, \omega)
\end{align*}
with 
${\mathcal C}(\omega)=
\left(\sqrt{c_0(\omega)},\, i c_0(\omega)\right)^{\rm T}$. 
Multiplying \eqref{ode_V} by 
${\mathcal C}(\omega)^{\rm T}{\mathcal P}(\omega)$ from 
the left, and using \eqref{diagonalize}, we get
\begin{align}
\pa_t \co{\widetilde{V}}(t)
=&
-\frac{1}{2t} {\mathcal C}(\omega)^{\rm T}{\mathcal P}(\omega)
F^{\rm red}\left(\omega, {\mathcal P}(\omega)^{\rm T}\widetilde{V}(t)\right)
+ 
\co{\widetilde{H}}\bigl(t, (t+\sigma)\omega\bigr)
\nonumber\\
=& 
-\frac{i}{t}\Psi\bigl(\co{\widetilde{V}}(t)\bigr)
 \co{\widetilde{V}}(t)
+
\co{\widetilde{H}}\bigl(t, (t+\sigma)\omega\bigr),
\end{align}
where 
$\co{\widetilde{H}}\bigl(t,x)
={\mathcal C}(|x|^{-1}x)^{\rm T}{\mathcal P}(|x|^{-1}x)H(t,x)$ 
and
$$
 \Psi(z)
 = 
 \frac{1}{2}\left(
 \widetilde{c}_1(\omega)\left(\realpart z\right)
 +
 \frac{\widetilde{c}_2(\omega)}{\sqrt{c_0(\omega)}}
 \left(\imagpart z \right) \right).
$$
In view of Lemma~\ref{lem6_001}, we need to solve
\begin{equation}
\label{ProfileF}
i \frac{d \widetilde{p}}{ds}(s)
= 
\Psi\bigl(\widetilde{p}(s)\bigr) \widetilde{p}(s)
\end{equation}
in order to specify the asymptotic profile of 
$\co{\widetilde{V}}(t;\sigma,\omega)$ for fixed $(\sigma, \omega)$. 
As is done in Section~\ref{ASPE}, we introduce  
$$
\left( \begin{matrix}
\widetilde{X}(s)\\
\widetilde{Y}(s)
\end{matrix}
\right)=
\frac{1}{2\sqrt{c_0(\omega)}}\left(
\begin{matrix}
\sqrt{c_0(\omega)}\widetilde{c}_1(\omega) & \widetilde{c}_2(\omega)\\
\widetilde{c}_2(\omega) & -\sqrt{c_0(\omega)}\widetilde{c}_1(\omega)
\end{matrix}
\right)
\left(
\begin{matrix}
\realpart \widetilde{p}(s)\\
\imagpart \widetilde{p}(s)
\end{matrix}
\right)
$$
so that we can reduce \eqref{ProfileF} to the simpler system
$$\
 \frac{d \widetilde{X}}{ds}(s)=-\widetilde{X}(s)\widetilde{Y}(s),
 \quad 
 \frac{d \widetilde{Y}}{ds}(s)=\widetilde{X}(s)^2.
$$ 
Now going similar lines to the proof of Corollary~\ref{cor6_03} we see that
\begin{equation}
\lim_{t\to\infty} \co{\widetilde{V}}(t;\sigma, \omega)
=\co{\widetilde{V}}^+(\sigma,\omega)
\end{equation}
for almost every $(\sigma, \omega)\in \R\times \Sph^2$,
where
$$
\co{\widetilde{V}}^+(\sigma,\omega)
=\frac{2\sqrt{c_0(\omega)}\left(\widetilde{c}_2(\omega)-i\sqrt{c_0(\omega)}\widetilde{c}_1(\omega)\right)}{c_0(\omega)\widetilde{c}_1(\omega)^2+\widetilde{c}_2(\omega)^2} \rho(\sigma, \omega)
$$
with some function $\rho=\rho(\sigma, \omega)$ when 
$\widetilde{c}_1(\omega)^2+\widetilde{c}_2(\omega)^2\ne 0$. 
Since we have 
$|\co{\widetilde{V}}(t;\sigma, \omega)|\le C\eps \jb{\sigma}^{-1+\mu}$ 
as in \eqref{Est_V_0}, we can prove 
$\co{\widetilde{V}}^+\in L^2(\R\times \Sph^2)$ 
and
\begin{equation}
 \lim_{t\to\infty} \int_{\R\times \Sph^2} 
 \left|\chi_t(\sigma)\co{\widetilde{V}}(t;\sigma, \omega)
       -
       \co{\widetilde{V}}^+(\sigma, \omega)\right|^2 d\sigma dS_\omega
 =
 0
\end{equation}
as before. Now, we put
\begin{align}
\widetilde{V}^+(\sigma, \omega)=& \left(
\begin{matrix}
\widetilde{V}_1^+(\sigma, \omega) \\
\widetilde{V}_2^+(\sigma, \omega)
\end{matrix}
\right)
=\frac{1}{c_0(\omega)} \left(
\begin{matrix}
\sqrt{c_0(\omega)} \realpart \co{\widetilde{V}}^+(\sigma, \omega)\\
\imagpart \co{\widetilde{V}}^+(\sigma, \omega)
\end{matrix}
\right)
\nonumber\\
=&\frac{2\rho(\sigma,\omega)}{c_0(\omega)\widetilde{c}_1(\omega)^2+\widetilde{c}_2(\omega)^2}
\left(
\begin{matrix}
\widetilde{c}_2(\omega)\\
-\widetilde{c}_1(\omega)
\end{matrix}
\right), 
\nonumber\\
V^+(\sigma, \omega)=&
\left(
\begin{matrix}
V_1^+(\sigma, \omega) \\
V_2^+(\sigma, \omega)
\end{matrix}
\right)
={\mathcal P(\omega)}^{T}\widetilde{V}^+(\sigma, \omega)
\nonumber\\
=&
\frac{2\rho(\sigma,\omega)}{c_0(\omega)\widetilde{c}_1(\omega)^2+\widetilde{c}_2(\omega)^2}
{\mathcal P}(\omega)^{\rm T}\left(
\begin{matrix}
\widetilde{c}_2(\omega)\\
-\widetilde{c}_1(\omega)
\end{matrix}
\right).  
\label{GeneralAsymptotics}
\end{align}
Then, recalling that ${\mathcal P}(\omega)$ is an orthogonal matrix, we have
\begin{align*}
& \int_{\Sph^2} 
  \left(\int_0^\infty 
    \left|V(t; r-t, \omega)-r V^{+,\sharp}(t, r\omega)\right|^2 dr
  \right) dS_\omega\\
& \quad =
  \int_{\Sph^2} \left(\int_0^\infty 
   \left|\widetilde{V}(t; r-t, \omega)
         -
         r \widetilde{V}^{+,\sharp}(t,r \omega)
   \right|^2 
  dr\right) dS_\omega\\
& \quad \le 
  C \int_{\R\times \Sph^2}
  \left|
    \chi_t(\sigma)\co{\widetilde{V}}(t;\sigma, \omega)
    -
    \co{\widetilde{V}}^+(\sigma, \omega)
  \right|^2 
  d\sigma dS_\omega\to 0
\end{align*}
as $t\to \infty$, where $V^{+,\sharp}$ and $\widetilde{V}^{+,\sharp}$
are defined from $V^+$ and $\widetilde{V}^+$ as before.
Finally, 
noting that \eqref{GeneralAsymptotics} implies
$c_1(\omega)V_1^+(\sigma, \omega)+c_2(\omega)V^+_2(\sigma, \omega)=0$
with
$$
\left(\begin{matrix} c_2(\omega)\\ -c_1(\omega) \end{matrix}\right)
:={\mathcal P}(\omega)^{\rm T} \left( \begin{matrix} \widetilde{c}_2(\omega)\\ -\widetilde{c}_1(\omega)
\end{matrix}\right),
$$
we can modify the proof of Theorem~\ref{thm_asymp} to obtain 
the following:
\begin{thm}\label{thm_general_asymp}
Suppose that $N=2$ and the condition {\rm (H)} is satisfied,
but the null condition is violated.
Let $\eps$ be sufficiently small, and $u=(u_1, u_2)^{\rm T}$ be
the global solution to \eqref{eq}--\eqref{data}. Then there
is $(f_j^+, g_j^+)\in \calH(\R^3)$
such that 
$$
\lim_{t\to\infty} \|u_j(t)-u_j^+(t)\|_E=0,\quad j=1,2,
$$
where $u_j^+$ is the solution to the free wave equation $\Box u_j^+=0$
with initial data $(u_j^+, \pa_t u_j^+)(0)=(f_j^+, g_j^+)$. Moreover,
there are bounded functions $c_1=c_1(\omega)$ and $c_2=c_2(\omega)$ of 
$\omega\in \Sph^2$ such that $\bigl(c_1(\omega), c_2(\omega)\bigr)\not\equiv (0,0)$ and
\begin{equation}
\label{SR}
 c_1(\omega){\mathcal T}[f_1^+, g_1^+](\sigma,\omega)
 +
 c_2(\omega){\mathcal T}[f_2^+, g_2^+](\sigma, \omega)
 =
 0
\end{equation}
for almost all $(\sigma,\omega)\in \R\times \Sph^2$,
where $\mathcal T$ is the translation representation.
Here $c_1$ and $c_2$ depend only on the coefficients of the nonlinearity $F$.
\end{thm}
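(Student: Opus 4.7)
The plan is to adapt Section~\ref{ASPE} and the proof of Theorem~\ref{thm_asymp}, leveraging the diagonalization ${\mathcal A}(\omega)={\mathcal P}(\omega)^{\rm T}\diag(1, c_0(\omega)){\mathcal P}(\omega)$ set up at the opening of this section. The strategy has four steps: (i) pointwise convergence of $\co{\widetilde{V}}(t;\sigma,\omega)$ by Lemma~\ref{lem6_001}; (ii) identification of the limit via explicit analysis of \eqref{ProfileF}, showing it lies in the one-dimensional subspace spanned by $\bigl(\widetilde{c}_2(\omega),-\widetilde{c}_1(\omega)\bigr)^{\rm T}$; (iii) $L^2$-convergence on $\R\times\Sph^2$ via dominated convergence; (iv) conversion to asymptotic free solutions $u_j^+$ via Lemma~\ref{KatLem}.

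For (i), I would invoke Lemma~\ref{lem6_001} with $\Phi=\Psi$, $J=-i\co{\widetilde{H}}$, and $t_0=t_{0,\sigma}$. Since $\widetilde{c}_1,\widetilde{c}_2$ are bounded and $c_0>0$ is continuous on the compact $\Sph^2$, the Lipschitz constant $C_0$ for the (real-valued) $\Psi$ is uniform in $\omega$. The bound \eqref{est_H} on $H$ together with orthogonality of ${\mathcal P}(\omega)$ yields \eqref{assump02} for $\co{\widetilde{H}}(\cdot,(\cdot+\sigma)\omega)$ with $E_0=C\eps\jb{\sigma}^{-\mu}$ and $\lambda=1-2\mu$, and \eqref{est_U_alpha} with $k=0$ gives $|\co{\widetilde{V}}(t_{0,\sigma};\sigma,\omega)|\le C\eps\jb{\sigma}^{\mu-1}$; the smallness condition $C_0(E_0t_0^{-\lambda}+|z(t_0)|\lambda)<\lambda^2$ then holds for sufficiently small $\eps$, uniformly in $(\sigma,\omega)$.

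For (ii), I would mimic the proof of Corollary~\ref{cor6_03}: under the linear change of variables to $(\widetilde{X},\widetilde{Y})$ displayed in the text, the quantity $\widetilde{X}^2+\widetilde{Y}^2$ is conserved along \eqref{ProfileF}, and explicit integration gives $\lim_{s\to\infty}\widetilde{X}(s)=0$, $\lim_{s\to\infty}\widetilde{Y}(s)=\pm\rho$. Inverting back yields the formula \eqref{GeneralAsymptotics}, and defining $c_1,c_2$ by the identity stated immediately before the theorem makes $V^+(\sigma,\omega)$ proportional to $\bigl(c_2(\omega),-c_1(\omega)\bigr)^{\rm T}$, whence $c_1(\omega)V_1^+(\sigma,\omega)+c_2(\omega)V_2^+(\sigma,\omega)=0$ whenever $\widetilde{c}_1(\omega)^2+\widetilde{c}_2(\omega)^2\ne 0$. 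The exceptional set where this quantity vanishes has surface measure zero by the polynomial argument given in the paragraph following \eqref{diagonalize}, so the relation holds almost everywhere; non-triviality of $(c_1,c_2)$ follows from $(\widetilde{c}_1,\widetilde{c}_2)\not\equiv (0,0)$ under violation of the null condition together with orthogonality of ${\mathcal P}$.

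For (iii) and (iv), I would mirror the proof of Theorem~\ref{thm_asymp} from Section~\ref{PT2}: orthogonality of ${\mathcal P}(\omega)$ preserves the bound \eqref{Est_V_0}, so Lebesgue's dominated convergence theorem yields the $L^2$-statement for $\co{\widetilde{V}}-\co{\widetilde{V}}^+$ displayed in the text; Lemmas~\ref{lem3_01}--\ref{lem3_02} and \eqref{rough_bound_u} then reduce $\sum_j\|\pa u_j(t,\cdot)-\hat{\omega}(\cdot)V_j^{+,\sharp}(t,\cdot)\|_{L^2(\R^3)}$ to that $L^2$-convergence, and Lemma~\ref{KatLem} applied componentwise produces $(f_j^+,g_j^+)\in\calH(\R^3)$ with ${\mathcal T}[f_j^+,g_j^+]=V_j^+$, from which \eqref{SR} is immediate via the identification in step (ii). The main obstacle I anticipate is uniformity in $\omega$ throughout the above, most notably in step (i) where the constants from Lemma~\ref{lem6_001} must be controlled independently of $\omega$; this is ultimately handled by compactness of $\Sph^2$ combined with continuity and boundedness of $\widetilde{c}_1,\widetilde{c}_2,c_0^{\pm 1},{\mathcal P}$. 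A secondary technical point is measurability of $V^+(\sigma,\omega)$, which follows from its representation as a pointwise limit of continuous functions.
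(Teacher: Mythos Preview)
Your proposal is correct and follows essentially the same route as the paper: the paper's proof of Theorem~\ref{thm_general_asymp} is precisely the exposition of Section~\ref{ConclRem} preceding the statement (diagonalization via ${\mathcal P}(\omega)$, application of Lemma~\ref{lem6_001} with $\Phi=\Psi$, explicit integration of \eqref{ProfileF} via the $(\widetilde{X},\widetilde{Y})$ variables, dominated convergence, and then the modification of the Section~\ref{PT2} argument through Lemma~\ref{KatLem}). The only slip is a harmless sign: comparing with \eqref{ode_z} one should take $J(t)=i\co{\widetilde{H}}\bigl(t,(t+\sigma)\omega\bigr)$ rather than $-i\co{\widetilde{H}}$, which of course does not affect \eqref{assump02}.
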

\begin{rmk}
The result of Theorem~\ref{thm_asymp} corresponds to the case where 
$c_1(\omega)\equiv 1$ and $c_2(\omega)\equiv 0$ in 
Theorem~\ref{thm_general_asymp}.
\end{rmk}

We conclude this paper with the following remark:
From
Theorem~\ref{thm_general_asymp}, 
we see that the global solution for small data
to a two-component system satisfying the condition (H) and violating the null 
condition is asymptotically free, 
but there is a strong relationship \eqref{SR} between the asymptotic profiles 
for the components $u_1$ and $u_2$. This is the special feature of the 
condition (H) with $N=2$. 
Since the solution for \eqref{ThirdExampleA} is not always 
asymptotically free, 
Theorem~\ref{thm_general_asymp} cannot be extended to the case $N\ge 3$ 
directly; there might be a wider variety of asymptotic behavior.

\section*{Acknowledgments}
The authors would like to express their sincere gratitude to
Professor Akitaka Matsumura for his comments on the earlier version of
this work.
The work of S.~K. 
is supported by Grant-in-Aid for Scientific Research (C) (No.~23540241), 
JSPS. 
The work of H.~S. 
is supported by Grant-in-Aid for Young Scientists~(B) (No.~22740089)
and Grant-in-Aid for Scientific Research (C) (No.~25400161), JSPS.


\end{document}